\theoremstyle{plain}
\newtheorem{thm}{Theorem}[section]
\newtheorem{ithm}{Theorem}
\newtheorem{lem}[thm]{Lemma}
\newtheorem{cor}[thm]{Corollary}
\newtheorem{que}{Question}
\theoremstyle{definition}
\theoremstyle{remark}
 \def\N{{\mathbb{N}}}  \def\P{{\mathbb{P}}}  \def\R{{\mathbb{R}}}        
    \def\cE{{\mathcal{E}}}  \def\cG{{\mathcal{G}}}            \def\cS{{\mathcal{S}}} \def\cT{{\mathcal{T}}}      
\newcommand\dist{\operatorname{d}}
\newcommand\gr{\operatorname{gr}}
\begin{document}
\title[Growth dichotomy for unimodular random rooted trees]{Growth dichotomy for unimodular random rooted trees}

\author{Mikl\'{o}s Abert}
\address{\parbox{\linewidth}{MTA Alfr\'{e}d R\'{e}nyi Institute of Mathematics
Re\'{a}ltanoda utca 13-15.
H-1053 Budapest,
Hungary
}}
\email{miklos.abert@renyi.mta.hu}
\urladdr{https://www.renyi.hu/~abert/}

\author{Mikolaj Fraczyk}
\address{\parbox{\linewidth}{Faculty of Mathematics and Computer Science, Jagiellonian University, ul. ${\L}$ojasiewicza 6, 30-348 Krak{\'o}w, Poland
}}
\email{mikolaj.fraczyk@uj.edu.pl}
\urladdr{https://sites.google.com/view/mikolaj-fraczyk/home}

\author{Ben Hayes}
\address{\parbox{\linewidth}{Department of Mathematics, University of Virginia, \\
141 Cabell Drive, Kerchof Hall,
P.O. Box 400137
Charlottesville, VA 22904}}
\email{brh5c@virginia.edu}
\urladdr{https://sites.google.com/site/benhayeshomepage/home}

\subjclass[2020]{60K35; 37A30, 37A20, 46N30, 60G10}

\thanks{M. Abert acknowledges support from the KKP 139502 project, the ERC Consolidator Grant 648017 and the Lendulet Groups and Graphs research group. B. Hayes gratefully acknowledges support from the NSF grants DMS-1600802, DMS-1827376, and DMS-2000105. M. Fraczyk acknowledges support from the Dioscuri programme initiated by the Max Planck Society, jointly managed with the National Science Centre in Poland, and mutually funded by Polish the Ministry of Education and Science and the German Federal Ministry of Education and Research.}

\begin{abstract}
  We show that the growth of a unimodular random rooted tree $(T,o)$ of degree bounded by $d$ always exists, assuming its upper growth passes the critical threshold $\sqrt{d-1}$. This complements Timar's work who showed the possible nonexistence of growth below this threshold. 
  
  The proof goes as follows. By Benjamini-Lyons-Schramm, we can realize $(T,o)$ as the cluster of the root for some invariant percolation on the $d$-regular tree. Then we show that for such a percolation, the limiting exponent with which the lazy random walk returns to the cluster of its starting point always exists. We develop a new method to get this, that we call the 2-3-method, as the usual pointwise ergodic theorems do not seem to work here. We then define and prove the Cohen-Grigorchuk co-growth formula to the invariant percolation setting. This establishes and expresses the growth of the cluster from the limiting exponent, assuming we are above the critical threshold.  
\end{abstract}

\maketitle
\section{Introduction}

A random rooted graph $(\cG,o)$ is \emph{unimodular}, if it satisfies the so-called Mass Transport Principle, saying that for any measurable paying scheme, the expected income and outpay of the root $o$ are equal. The Benjamini-Schramm limit of finite graphs is a URG, and the limit tends to behave like a vertex transitive graph in many senses. This allows one to prove new results on arbitrary graph sequences, pulling mathematical energy from the vertex transitive world (see \cite{mKesten}, \cite{LPBook} and the references therein). In general, results that hold for vertex transitive graphs tend to generalize to the unimodular random setting. 

At the Banff workshop “Graphs, groups and stochastics” in 2011,  Kaimanovich asked if the exponential growth
\[\lim_{n\to\infty}|B(o,n)|^{1/n}\]
almost surely exists for a unimodular random rooted graph $(\cG,o)$ where $B(o,n)$ denotes the ball of radius $n$ in $\cG$. This trivially holds for a fixed vertex transitive graph by submultiplicativity of the size of balls. Tim\'{a}r showed that this was false in \cite{Timar14}, giving a counterexample that was a unimodular random rooted tree of bounded degree. There was a consensus in the community that this is where the story ends.

In Tim\'{a}r's construction, the random tree is one ended a.s., the lower growth always equals $1$, and the upper growth is almost surely a small positive constant. We initially proved (see Theorem \ref{thm:HFGrowth} below) that a one-ended unimodular random rooted tree of maximal degree $d$ has upper growth at most $\sqrt{d-1}$. To our surprise, we then also found that once the upper growth of the tree surpasses this threshold, the growth will actually start to exist!

\begin{ithm}\label{thm:SGrowth}
Let $(T,o)$ be a unimodular random rooted tree with degree bound $d$ and upper growth
\[\limsup_{n\to\infty}|B(o,n)|^{1/n} > \sqrt{d-1}\]
Then the growth
\[\lim_{n\to\infty}|B(o,n)|^{1/n}\]
exists.
\end{ithm}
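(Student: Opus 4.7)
I follow the four-step roadmap indicated in the abstract. First, using Benjamini-Lyons-Schramm, I realize $(T,o)$ as the cluster $C(o)$ of the root in an $\Aut(T_d)$-invariant bond percolation $\omega$ on the $d$-regular tree $T_d$. This moves the problem from a random tree to a fixed homogeneous ambient graph carrying a random subgraph, so that a spectral quantity of a walk on $T_d$ can be attached to the cluster.

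Second, for the lazy simple random walk $(X_k)_{k\geq 0}$ on $T_d$ started at $o$, set
\[
q_n(\omega)=\Pr\bigl(X_n=o,\ \{X_0,\ldots,X_n\}\subset C(o,\omega)\bigr),
\]
the probability that the walk returns to $o$ at time $n$ without ever leaving the cluster. The crux is to show that $\lim_n q_n(\omega)^{1/n}$ exists and equals a deterministic constant $\rho$ almost surely. Kingman's subadditive ergodic theorem does not apply directly: loop-concatenation at $o$ is not realized by a measure-preserving action on $(\omega,o)$, and $q_n$ is not sample-pathwise submultiplicative. I would use the \emph{2-3 method}: concatenating two cluster-confined loops of even length at $o$ and integrating over $\omega$ via the Mass Transport Principle yields a supermultiplicative inequality for $\E q_{2n}$, so Fekete's lemma supplies the limit along $2\N$; the same argument works along $3\N$. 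Because every sufficiently large integer is a nonnegative combination of $2$ and $3$, a further concatenation forces the two limits to agree and upgrades them to an a.s.\ pointwise limit along all $n$.

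The third step extends the Cohen-Grigorchuk cogrowth formula to invariant percolation clusters in $T_d$. Writing $p_n=\Pr(X_n=o)$ for the known return probabilities of the lazy walk on $T_d$, I would decompose each $T_d$-loop at $o$ according to its intersection pattern with $C(o)$; because $T_d$ is a tree, this decomposition has the same combinatorial shape as the free-group Mobius identity underlying the classical formula. Unimodularity plays the role of vertex-transitivity when commuting the root-at-$o$ expectation with the rearrangement, yielding an identity $\alpha=F(\rho,d)$, where $F$ is the Grigorchuk transform and $\alpha=\lim_n|B(o,n)|^{1/n}$. Finally, $F(\cdot,d)$ is two-to-one with the two preimages of a spectral value paired by the Kesten involution $\alpha\leftrightarrow (d-1)/\alpha$ meeting at $\sqrt{d-1}$; above that threshold $F$ is injective, so $\alpha$ is uniquely determined by $\rho$, proving the theorem.

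\emph{Main obstacle.} The genuinely hard step is the existence of $\rho$. In a fixed Cayley graph it is immediate from submultiplicativity, but in the percolation setting this fails sample-pathwise and no standard ergodic theorem seems to supply it; the 2-3 method is the novel tool needed to bypass this. Step 3 is also delicate, because the combinatorial identity behind Cohen-Grigorchuk must be re-derived using the Mass Transport Principle rather than a group action, but it is closer to a transcription than to a new theorem.
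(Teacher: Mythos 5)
Your high-level roadmap matches the paper's (Benjamini--Lyons--Schramm realization, a walk exponent whose existence is the hard step, then a Cohen--Grigorchuk-type formula), but two of your steps contain genuine errors. First, the exponent you propose is the wrong one. You take $q_n(\omega)$ to be the probability that the walk returns to $o$ at time $n$ \emph{while confined to the cluster}. That quantity is pathwise supermultiplicative ($q_{m+n}\geq q_m q_n$ by concatenating confined loops), so its exponential rate exists by Fekete with no new method needed --- but it is the return probability of the walk on the cluster itself, a spectral radius of the cluster as a graph, and it does not feed into the cogrowth formula. The quantity the paper uses is $p_n^{\cE}(o)=\P[X_n\in\cE(o)]$, the probability that the \emph{unconstrained} lazy walk on $T_d$ lands anywhere in the cluster at time $n$; by spherical symmetry this equals $\sum_{r=0}^n \P(\dist(X_n,o)=r)\,a_r$ with $a_r=|S_T(o,r)|/(d(d-1)^{r-1})$, which is exactly the bridge to $|B(o,n)|$. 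This quantity is neither super- nor submultiplicative pathwise (at time $m$ the walk sits at some $y\in\cE(o)$ from which the cluster looks different), and that is the real obstacle. The paper's 2-3 method is not ``Fekete along $2\N$ and $3\N$ plus the numerical semigroup $\langle 2,3\rangle$''; it consists of Cauchy--Schwarz inequalities $p_{2l}^\cE(x)^2\leq f_1\, p_{4l}^\cE(x)$ and $p_{2l}^\cE(x)^3\leq f_2\, p_{6l}^\cE(x)$, with error factors $f_1,f_2$ of expectation $1$ by Mass Transport, iterated along times $2^{p+1}3^q l$, which are dense on the logarithmic scale.

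Second, your final step is circular: writing $\alpha=F(\rho,d)$ and invoking injectivity of $F$ above $\sqrt{d-1}$ presupposes that $\alpha=\lim_n|B(o,n)|^{1/n}$ exists, which is the assertion to be proved (the paper stresses that the classical Cohen--Grigorchuk analysis only controls the \emph{upper} growth). Knowing $\rho$ exists gives only $\log\rho=\lim_n\max_{0\le r\le n}\bigl(I(r/n)-\alpha_r/n\bigr)$ with $\alpha_r=-\log a_r$, where $I$ is the large-deviation rate function for $\dist(X_n,o)$; extracting $\lim_r \alpha_r/r$ from this requires the strict concavity of $I$, the computation $I'(0)=\tfrac12\log(d-1)$ (this is where the hypothesis $\overline{\gr}(T)>\sqrt{d-1}$ enters), and a Lipschitz bound $0\le\alpha_{r+1}-\alpha_r\le \log(d-1)$ on the sphere sizes. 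That Lipschitz bound fails for trees with leaves, so the paper first replaces $(T,o)$ by its spine and shows, again via Mass Transport, that this does not change the upper growth in the relevant range. Your proposal omits the spine reduction, the large deviation principle, and the concavity argument entirely, and these are precisely the ingredients that turn the formula relating growth and spectral data into an existence-of-growth statement.
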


The proof starts by applying the theorem of Benjamini, Lyons and Schramm \cite{BLS}, saying that every unimodular random rooted tree with degree bound $d$ can be realized as the cluster (connected component) of the root of some invariant edge percolation of the $d$-regular tree $T_d$. The next step is to establish a notion of ``dimension" or ``exponent" of the cluster of the root for such invariant percolations, using the return exponent of the lazy random walk starting at the root, to the cluster of the root. 

\begin{ithm}\label{thm:Exponent}
Consider an invariant edge percolation $\cE$ of the $d$-regular tree $T_d$, rooted at $o$ and let $\cE(o)$ be the connected component of $o$. Let $(X_n)_{n\in\mathbb N}$ be the lazy random walk on $T_d$, starting at $o$. Then the limit 

\[\rho_\cE:=\lim_{n\to\infty}\P[X_{n}\in \cE(o)]^{1/n}\]

exists. 
\end{ithm}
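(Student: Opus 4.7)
The plan is to set $a_n := \P[X_n \in \cE(o)] = \E\,p_n(\cE)$, where $p_n(\cE) := \P[X_n \in \cE(o)\mid \cE] = \sum_{v \in \cE(o)} p^{(n)}(o,v)$ is the conditional hit probability given the percolation, and to prove that $a_n^{1/n}$ converges. The starting ingredient is a supermultiplicative inequality derived from the Mass Transport Principle. By restricting the walk to lie in $\cE(o)$ at the intermediate time $m$, one gets
\[
a_{m+n} \;\geq\; \E \sum_{u,v \in \cE(o)} p^{(m)}(o,u)\, p^{(n)}(u,v),
\]
and applying the MTP to the diagonally invariant function $F(\cE,o,u) = \mathbf{1}_{u \in \cE(o)}\, p^{(m)}(o,u)\, f_n(\cE,u)$ with $f_n(\cE,u) = \sum_{v \in \cE(u)} p^{(n)}(u,v)$ re-roots the tree at $u$ and rewrites the right-hand side as $\E[p_m(\cE)\,p_n(\cE)]$. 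The conclusion is $a_{m+n} \geq \E[p_m\,p_n]$.

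Taking $m = n$, Cauchy--Schwarz gives $\E p_n^2 \geq (\E p_n)^2 = a_n^2$, hence $a_{2n} \geq a_n^2$; the sequence $a_{2^k}^{1/2^k}$ is therefore nondecreasing in $k$ and converges to some $\rho^{*} \in [1/2, 1]$. The real work is then to show that the full sequence $a_n^{1/n}$, not just the subsequence along powers of $2$, converges to $\rho^{*}$. This is the role of the ``$2$-$3$ method'' of the abstract: I would iterate the MTP argument once more to produce an analogous cubic inequality $a_{3n} \geq c \cdot a_n^3$ for some universal $c > 0$, by conditioning the walk to be in $\cE(o)$ at both times $n$ and $2n$ and re-rooting twice. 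With both the doubling and tripling bounds in hand, and using the smoothness estimate $a_{n+1} \geq a_n/2$ coming from laziness of the walk, the monotonicity of $\log a_n / n$ along the multiplicative semigroup generated by $2$ and $3$ (whose logarithms are dense in $\R_{\geq 0}$) should force $a_n^{1/n} \to \rho^{*}$ for all $n$.

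The main obstacle I anticipate is the absence of FKG-type positive-association inequalities for general invariant percolations. Were $p_m$ and $p_n$ positively correlated---as they are under Bernoulli or any FKG-positive law, since both are monotone increasing functionals of the cluster $\cE(o)$---then $\E[p_m p_n] \geq a_m a_n$ would yield full supermultiplicativity $a_{m+n} \geq a_m a_n$, and Fekete's lemma would conclude immediately. Since FKG can fail for arbitrary invariant percolations, this direct route is blocked; the authors' remark that ``usual pointwise ergodic theorems do not seem to work'' reflects the same correlation obstruction in the Kingman subadditive framework. The $2$-$3$ method is designed to circumvent this, relying only on the two special regimes $m = n$ and $m = 2n$, where Cauchy--Schwarz and a cubic MTP identity give unconditional lower bounds independent of FKG. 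Extracting the convergence of $a_n^{1/n}$ from just these two inequalities---in particular, establishing the clean cubic bound---is the technical heart of the argument.
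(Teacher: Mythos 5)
Your doubling step is correct: restricting the walk to the cluster at time $n$, re-rooting by mass transport, and using the symmetry $p^{(n)}(u,v)=p^{(n)}(v,u)$ does give $a_{2n}\geq \E[p_n^2]\geq a_n^2$. The genuine gap is the tripling bound $a_{3n}\geq c\,a_n^3$, which you yourself flag as the technical heart but do not prove, and which does not follow from the double re-rooting you describe. Inserting the cluster condition at times $n$ and $2n$ and applying the mass transport principle yields only $a_{3n}\geq \E\bigl[p_n\cdot q_{2n}\bigr]$, where $q_{2n}$ is the quenched probability that the walk is in $\cE(o)$ at both times $n$ and $2n$; bounding $\E[p_n\,q_{2n}]$ below by $c(\E p_n)^3$ is exactly the positive-correlation statement (between the two halves of the walk, and between $p_n$ and $q_{2n}$ over the percolation) that you correctly note is unavailable without FKG. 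The Cauchy--Schwarz trick that rescues the square has no analogue for three segments, because there is no symmetric folding of three walk pieces into one another. So the plan is blocked at precisely the point where it claims the new method should enter.

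The paper's $2$-$3$ method is different in kind and avoids any annealed supermultiplicativity. It works with the quenched quantity $p_{2l}^\cE(x)$ and proves the pointwise inequalities $p_{2l}^\cE(x)^2\leq f_1^\cE(x,2l)\,p_{4l}^\cE(x)$ and $p_{2l}^\cE(x)^3\leq f_2^\cE(x,2l)\,p_{6l}^\cE(x)$ (Lemma \ref{lem23}), where $f_1,f_2$ are explicit random correction factors; the whole point of the two mass-transport identities in Lemma \ref{lem-MTP1} --- the second built from a weight containing the reciprocal of the cluster average $\sum_{z\in\cE(x)}p_{2l}(y,z)p_{2l}^\cE(z)$ --- is that $\E f_1=\E f_2=1$. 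Iterating along times $2^{p+1}3^q l$, the bound $\sum_l l^{-2}f_i^\cE(x,2l)<\infty$ a.s.\ makes the accumulated error subexponential, and the multiplicative density of $\{2^p3^q\}$ gives almost sure existence of $\lim_l p_{2l}^\cE(x)^{1/2l}$; the annealed exponent is identified only afterwards, under indistinguishability, by an AM--GM plus Fatou argument. This points to a second mismatch in your framing: you aim at convergence of the annealed sequence $(\E p_n)^{1/n}$, whereas the statement proved in Theorem \ref{ThmA}, and the one actually used in Section \ref{sec:GrowthInTrees} (where the quenched limit for a.e.\ realization feeds into Lemma \ref{lem:ConvexConvergence}), is the almost-sure one; annealed convergence does not imply quenched convergence, even under ergodicity, since the a.s.\ constancy of $\limsup$ and $\liminf$ does not force them to agree. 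Your final $2$-$3$ density plus laziness bookkeeping is a sound skeleton --- the paper runs essentially the same density argument --- but it must be run on quenched inequalities with controlled random errors, which is exactly what the $f_1,f_2$ machinery supplies.
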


Just by its definition, the above limit could depend on the realization of the unimodular random rooted tree  $(T,o)$ as an invariant percolation cluster, but it is easy to show that it only depends on $(T,o)$ itself. In particular, if $(T,o)$ is ergodic, then this limiting exponent is a constant. \smallskip 

\noindent {\bf Remark.} Note that we only use the lazy random walk instead of the simple one because it avoids periodicity issues. All the results of the paper can be adapted to simple random walk. \smallskip 

As the usual pointwise ergodic theorems did not seem to do the job here, in the next step we develop a new method to show the existence of the above exponent. It turns out that this method (that we call the 2-3 method) and also the existence of the exponent applies in a much greater generality than stated here, for arbitrary invariant random partitions of countable groups, and even more generally, for arbitrary subrelations of p.m.p. equivalence relations. As one would expect, the above exponent can be interpreted as the spectral radius (norm) of a suitable operator, but that takes its natural form only in the language of subrelations. The subrelation language as well as the most general results are developed in the paper \cite{AFH}. In the present paper, we do not use operator theory language and decided to make the paper self-contained by giving a probabilistic proof for the minimal version of the 2-3 method needed for percolation clusters as well. 

In the next step, we aim to express the growth of our tree $(T,o)$ in terms of the above exponent. We first use a trick that allows us to assume that the tree has no leaves. For this, we substitute the original rooted tree with its \emph{spine}, defined as the union of its bi-infinite geodesics. Note that the spine can be also obtained by consecutively removing the leaves from $(T,o)$). 

\begin{ithm}\label{thm:HFGrowth spine}
Let $(T,o)$ be a unimodular random rooted tree  of degree at most $d$. If the upper growth of $(T,o)$ is larger than $\sqrt{d-1}$ then the spine of $(T,o)$ has the same growth as $(T,o)$.\end{ithm}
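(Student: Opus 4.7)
The strategy is to compare balls in $T$ with balls in the spine $S$ via a hanging-tree decomposition, and to control the hanging trees using Theorem~\ref{thm:HFGrowth}.

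\textbf{Reduction and ball decomposition.} By disintegration we may assume $(T,o)$ is ergodic, so its upper growth $\alpha$ is almost surely a constant exceeding $\sqrt{d-1}$. Theorem~\ref{thm:HFGrowth} forbids $T$ from being one-ended almost surely, whence the spine $S$ is a.s.\ a non-empty subtree, and every connected component of $T\setminus S$ is either finite or one-ended, attached to $S$ at a unique vertex. Letting $f\colon V(T)\to V(S)$ be the nearest-spine-point projection, $D:=d_T(o,f(o))$ is a.s.\ finite, so shifting $n\mapsto n\pm D$ (which preserves growth rates) lets us assume $o\in S$. Writing $T_v$ for the hanging subtree at $v\in S$ rooted at $v$, and $g_v(k):=|B_{T_v}(v,k)|$, we have
\[
|B_T(o,n)|\;=\;\sum_{v\in B_S(o,n)} g_v\!\bigl(n-d_S(o,v)\bigr),
\]
which reduces the problem to bounding the $g_v$.

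\textbf{Main step: hanging trees are sub-$\sqrt{d-1}$.} The crux is the estimate
\[
\limsup_{k\to\infty}\,g_{f(o)}(k)^{1/k}\;\leq\;\sqrt{d-1}\qquad\text{a.s.}
\]
The plan is to attach at $v$ an auxiliary infinite ray to each hanging tree $T_v$, producing a one-ended tree $\widehat T_v$ of degree at most $d$ with the same ball-growth as $T_v$, and then assemble the family $\{\widehat T_v\}_{v\in S}$ into a single unimodular random rooted tree via a spine-based re-rooting, to which Theorem~\ref{thm:HFGrowth} directly applies. The main obstacle is that the natural spine-vertex weight $|f^{-1}(v)|$ needed in the mass transport can be infinite when the hanging tree is infinite (one-ended), making the biased law of $f(o)$ ill-defined. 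We would circumvent this by truncating each hanging tree at depth $N$, so the weights become finite, applying the URG framework and Theorem~\ref{thm:HFGrowth} to the resulting one-ended truncations, and then letting $N\to\infty$ via monotone convergence together with uniform tail bounds derived from unimodularity and the degree constraint.

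\textbf{Convolution and conclusion.} Given the hanging-tree bound, for every $\varepsilon>0$ there exists a random $C_\varepsilon<\infty$ such that $g_v(k)\leq C_\varepsilon(\sqrt{d-1}+\varepsilon)^k$ for all $v\in B_S(o,n)$ (uniformity coming from a mass-transport tail estimate). Plugging into the ball decomposition and bounding the spheres by $|S_S(o,k)|\leq C'_\varepsilon(\beta_++\varepsilon)^k$, with $\beta_+$ the upper growth of $S$, yields
\[
|B_T(o,n)|\;\leq\;C''_\varepsilon(n+1)\bigl[\max(\beta_+,\sqrt{d-1}+\varepsilon)\bigr]^n.
\]
This forces $\alpha\leq\max(\beta_+,\sqrt{d-1})$; combined with the trivial $\beta_+\leq\alpha$ (since $B_S\subseteq B_T$) and the hypothesis $\alpha>\sqrt{d-1}$, it gives $\alpha=\beta_+$, i.e.\ the equality of upper growths. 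For the lower growth, one invokes the separate leafless result (established elsewhere in the paper via Theorem~\ref{thm:Exponent}) showing that the growth function of the leafless URG tree $S$ converges, so $\beta_-=\beta_+=\alpha$; combined with $|B_S|\leq|B_T|$ this forces the lower growth of $T$ to equal $\alpha$ as well, completing the equality of growths.
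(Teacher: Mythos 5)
Your overall scheme (decompose $|B_T(o,n)|$ as a convolution of sphere sizes of the spine with ball sizes of the hanging trees, show the hanging trees grow slower than $\sqrt{d-1}$, conclude $\overline{\gr}(T)\le\max\{\overline{\gr}(S),\sqrt{d-1}\}$) is the same as the paper's, but there are two problems, one cosmetic and one fatal. The cosmetic one: with the paper's definition, the spine is the union of the bi-infinite geodesics, and since every end of a tree with at least two ends is a limit of such geodesics, every component of $T\setminus S$ is \emph{finite}. Your entire ``Main step'' (one-ended hanging trees, attaching auxiliary rays, truncating at depth $N$, invoking Theorem~\ref{thm:HFGrowth}) is machinery for a situation that does not occur; the pointwise statement $\limsup_k g_{f(o)}(k)^{1/k}\le\sqrt{d-1}$ is trivially true because $g_{f(o)}$ is eventually constant. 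What one actually needs about the decorations is only that $\mathbb E[w(o')]<\infty$, where $w(v)$ is the size of the decoration at a spine vertex $v$; this is a one-line mass transport (Lemma~\ref{decorations}).

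The fatal gap is in your ``Convolution and conclusion'' step: you assert a single random constant $C_\varepsilon$ with $g_v(k)\le C_\varepsilon(\sqrt{d-1}+\varepsilon)^k$ \emph{simultaneously for all} $v\in B_S(o,n)$, justified only by ``a mass-transport tail estimate.'' A pointwise a.s.\ bound at the root does not upgrade to a uniform bound over the exponentially many vertices of $B_S(o,n)$: Markov plus a union bound over $\sim\beta^n$ vertices only controls $\max_{v\in B_S(o,n)}w(v)$ up to a factor exponential in $n$, which is useless for vertices near the boundary of the ball where $k=n-d_S(o,v)$ is small, and the deterministic bound $g_v(k)\lesssim(d-1)^k$ is off by exactly the square you cannot afford. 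This failure of uniformity inside exponentially large balls is precisely the phenomenon that makes Tim\'ar's counterexample possible, so it cannot be waved away. The paper never proves a sup bound; instead it bounds the \emph{sum} directly, writing $|B_T(o,n)|\le\sum_{y\in B_S(o,n)}\min\{w(y),(d-1)^{n-d(o,y)}\}\le\sum_y\bigl(w(y)(d-1)^{n-d(o,y)}\bigr)^{1/2}$, then splits this by Cauchy--Schwarz into $\sum_y w(y)|S_S(y,d(o,y))|^{-1}$ and $\sum_y(d-1)^{n-d(o,y)}|S_S(y,d(o,y))|$, each of which is controlled in expectation by a mass transport argument and then almost surely by Borel--Cantelli. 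That averaged, first-moment argument is the idea your proposal is missing, and without it (or a substitute) the proof does not close.
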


In particular, in the threshold we are looking at, the growth will exist for $(T,o)$ as long as it exists for its spine. Passing to the spine makes the sizes of spheres in $(T,o)$ to be monotone increasing, that will be useful later. 

At this point we take out a well-known tool from the group theory toolbox. If we considered a fixed subgroup $H$ of the free group $F$ as a substitute of $\cE(o)$ then the above sampling exponent would trivially exist by submultiplicativity and be equal to the spectral radius of the corresponding random walk operator on $l^2(F/H)$. The growth of $H$ embedded in $F$ is called the \emph{co-growth} of $H$. In the realm of subgroups, the connection between this co-growth and the spectral radius was established by Grigorchuk and Cohen, leading to the famous co-growth formula \cite{Grig,Cohen}. 

It turns out that one can generalize this formula to our invariant percolation setting and prove that in the range where the upper growth of $(T,o)$ is $(\sqrt{d-1},d-1]$, the growth and the exponent of $(T,o)$ completely determine each other. Note that in the subgroup case of Grigorchuk and Cohen, the existence of the co-growth was automatic from submultiplicativity and the original analysis in \cite{Grig,Cohen} leading to the co-growth formula only detected the upper growth. To detect the actual growth, we have to use a large deviation principle for the distance traversed by a random walk on a regular tree \cite[Thm 19.4]{Woess}. We also have to exploit, in a nontrivial way, concavity of the rate function as well as the above explained monotonicity of the growth sequence. This part of the paper seems to be restricted to connected components of percolated trees, that is, it does not seem to generalize to subrelations, but we expect there to be further applications of this method.

As we mentioned, Timar's example for the nonexistence of growth is a one ended unimodular random rooted tree, in particular, it is hyperfinite. Recall that a unimodular random rooted tree  is hyperfinite if and only if it has one or two ends a.s. \cite[Theorem 8.9]{AldousLyons}. Our next theorem, already mentioned at the start, shows that a hyperfinite unimodular random rooted tree  has upper growth at most $\sqrt{d-1}$.

\begin{ithm}\label{thm:HFGrowth}
Let $(T,o)$ be a hyperfinite unimodular random rooted tree  of degree at most $d$. If $(T,o)$ is two ended then its growth exists and equals $1$. If $(T,o)$ is one ended then its upper growth is at most $\sqrt{d-1}$ and by Timar's construction, its growth may not exist.\end{ithm}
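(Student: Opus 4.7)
The plan is to treat the two-ended and one-ended cases separately, as their structural arguments diverge.

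For the two-ended case, I would use that a two-ended unimodular random rooted tree $(T,o)$ almost surely has a unique bi-infinite geodesic $S$ (its spine), and that $T\setminus S$ decomposes into finite bushes $T_s$ hanging off each spine vertex $s\in S$. A mass transport argument applied to the kernel $F(x,y)=\mathbf{1}[x=p_S(y)]$, where $p_S(y)$ is the closest spine vertex to $y$, gives $\E[|T_o|\mathbf{1}[o\in S]]=1$, while a separate transport in which each vertex sends mass $1/|T_{p_S(v)}|$ to $p_S(v)$ shows $\P[o\in S]>0$. Hence the Palm distribution of the bush size at a spine vertex has finite mean $1/\P[o\in S]$. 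Parameterising the spine as $(s_k)_{k\in\Z}$, the shift along the spine is a measure preserving $\Z$-action on the Palm measure, and Birkhoff's ergodic theorem (valid without ergodicity) yields $\sum_{k=-n}^n|T_{s_k}|=O(n)$ almost surely. Combined with the bound $|B(o,n)|\leq 2d(o,S)+2n+1+\sum_{k=-n}^{n}|T_{s_k}|$ this forces $|B(o,n)|=O(n)$, so the growth exists and equals $1$.

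For the one-ended case, I would combine the machinery of the paper with the fact that one-endedness of a unimodular random rooted tree implies hyperfiniteness \cite[Theorem 8.9]{AldousLyons}. By Benjamini-Lyons-Schramm I would realise $(T,o)$ as the cluster $\cE(o)$ of the root for an invariant edge percolation on the $d$-regular tree $T_d$, and Theorem \ref{thm:Exponent} then provides the return exponent $\rho_\cE$. The crucial input is that for a hyperfinite cluster $\rho_\cE$ must equal the full spectral radius $\rho_{\max}=\tfrac{1}{2}+\tfrac{\sqrt{d-1}}{d}$ of the lazy random walk on $T_d$. Suppose now for contradiction that the upper growth of $(T,o)$ strictly exceeds $\sqrt{d-1}$. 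By Theorem \ref{thm:SGrowth} the actual growth $r$ exists with $r>\sqrt{d-1}$, and the percolation version of the Cohen-Grigorchuk co-growth formula developed earlier in the paper expresses $\rho_\cE$ as a strictly increasing function of $r$ on this range, taking the value $\rho_{\max}$ at the endpoint $r=\sqrt{d-1}$. Hence $\rho_\cE>\rho_{\max}$, contradicting hyperfiniteness and giving the upper growth bound. The nonexistence of growth within this regime is then furnished by Tim\'{a}r's construction \cite{Timar14}.

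The main obstacle I anticipate is pinning down the identity $\rho_\cE=\rho_{\max}$ for hyperfinite clusters: hyperfiniteness is a measure theoretic approximation property, whereas $\rho_\cE$ is defined purely probabilistically as the exponent of the return probability of the lazy walk to the cluster. The standard approach is to exhaust the cluster by finite subequivalence relations and compare the restricted walks with the ambient one, and this argument becomes more transparent in the operator and subrelation language of \cite{AFH}. A minor subtlety is the non-ergodic case of Birkhoff's theorem in the two-ended argument, where one only obtains convergence to a conditional expectation; since the latter is almost surely finite the bound $|B(o,n)|=O(n)$ still holds.
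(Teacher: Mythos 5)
Your two-ended argument is correct, but it takes a different route from the paper. The paper deduces this case from the regular ($d'=2$) case of Theorem \ref{thm:spineGrowth}: once Lemma \ref{decorations} gives $\E[w(o')]<\infty$, a single mass transport with the exponentially decaying kernel $\alpha^{-\dist(x,y)}w(x)$ yields $\overline{\gr}(\cT)\le\alpha$ for every $\alpha>1$, with no ergodic theorem. Your version instead observes that the spine with bush-size marks, rooted at a spine vertex under the conditioned law, is a unimodular random rooted line, hence a stationary $\Z$-process, and Birkhoff then even gives $|B(o,n)|=O(n)$ almost surely. That is fine, provided you add a word on why the spine shift preserves the conditioned measure (the Mass Transport Principle restricted to transports supported on the spine) and why root-invariance of the growth lets you pass from the conditioned root back to $o$.

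The one-ended half has a genuine gap. First, $\P[X_n\in\cE(o)]\ge\P[X_n=o]$ for every invariant percolation, so $\rho_\cE$ is always at least the spectral radius of the lazy walk on $T_d$; the content of your ``crucial input'' is the reverse inequality for hyperfinite clusters, and, through the very co-growth formula you invoke, that inequality is essentially a restatement of the bound $\overline{\gr}(\cT)\le\sqrt{d-1}$ you are trying to prove. You do not prove it: exhausting the cluster by finite subrelations gives, for fixed $n$, $\P[X_n\in C_k(o)]\le\E\bigl[|C_k(o)|\bigr]\max_y p_n(o,y)$, but $\E\bigl[|C_k(o)|\bigr]\to\infty$ as $k\to\infty$, and one must interchange this limit with the $n\to\infty$ exponential-rate limit; controlling that interchange is exactly the hard point (it is what the operator/subrelation machinery of \cite{AFH} is built for), not a routine comparison. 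Second, the reduction is circular relative to this paper: you cite Theorem \ref{thm:SGrowth} and Corollary \ref{cor:cogrowth}, but Theorem \ref{thm:SGrowth} is proved by passing to the spine via Theorem \ref{thm:spineGrowth}, whose general case explicitly quotes the one-ended bound (Theorem \ref{thm:OneEnded}) --- the statement at hand --- to dispose of the empty-spine case; moreover a one-ended tree has empty spine and necessarily has leaves, so the leafless monotonicity $|S_\cT(o,n+1)|\ge|S_\cT(o,n)|$ underlying the co-growth computation is unavailable, and that machinery cannot even be run on your hypothetical counterexample. The paper's actual proof (Theorem \ref{thm:OneEnded}) is elementary and direct: orient edges toward the unique end, note that the expected number of $r$-descendants of the root is $1$ by mass transport, cover the $r$-sphere by descendants of ascendants, split the resulting convolution at $r/2$ to get $\E|S_{\cT}(o,r)|\ll r(d-1)^{r/2}$, and conclude with Markov and Borel--Cantelli. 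You should either argue along those lines or actually prove the hyperfinite co-spectral-radius identity, which is a result in its own right rather than a quotable fact.
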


Note that the upper bound $\sqrt{d-1}$ for the (upper) growth of one ended unimodular random rooted trees can be achieved by the so-called \emph{canopy tree}, the Benjamini-Schramm limit of metric balls in $T_d$. \smallskip 

Theorems \ref{thm:SGrowth} and  \ref{thm:HFGrowth} still leave open the case of unimodular random rooted trees with infinitely many ends (or, equivalently, the non-hyperfinite case), but with upper growth at most $\sqrt{d-1}$. We could not decide whether these trees will also have growth or not.

\begin{que}\label{que:main}
Let $(T,o)$ be a unimodular random rooted tree with bounded degree and with infinitely many ends a.s. Does the growth
\[\lim_{n\to\infty}|B(o,n)|^{1/n}\]
exist?
\end{que}

This question is actually two questions packed together that both make sense in themselves. We discuss these and some initial results at the end of the paper. \smallskip  

The paper is structured as follows. In Section \ref{sec:Prelim} we define some of the basic notions of the paper and prove Theorem \ref{thm:HFGrowth}. In Section \ref{sec:Unim23} we establish the 2-3 method and show how it implies Theorem \ref{thm:Exponent}. In Section \ref{sec:Spine} we define the spine and prove Theorem \ref{thm:HFGrowth spine}. In Section \ref{sec:LDP} we establish the large deviation principle result needed for our main Theorem \ref{thm:SGrowth}, which is then proved in Section \ref{sec:GrowthInTrees}. Finally, in Section \ref{sec:questions} we suggest some questions and further directions. \smallskip 

\noindent \textbf{Acknowledgements.} Much of the work by the last named author was done on visits to the R\'{e}nyi institute in Budapest. He would like to thank the R\'{e}nyi Institute for its hospitality.

\section{Preliminaries}\label{sec:Prelim}

In this section we define some basic notions, like growth and unimodularity, discuss invariant percolation and prove the one-ended case of Theorem \ref{thm:HFGrowth}. This proof serves as a good practice of the Mass Transport Principle. The two-ended case will be proved in Subsection \ref{sec:Spine}.

Let $(G,o)$ be a connected rooted graph of bounded degree. We define the lower and upper growth of $(G,o)$:
\[\underline{\gr}(G)=\liminf_{n\to\infty} |B_G(o,n)|^{1/n},\qquad \overline{\gr}(G)=\limsup_{n\to\infty} |B_G(o,n)|^{1/n}.\]
It is easy to see that they do not depend on the choice of the root. If the above quantities coincide, we say that $G$ has growth and write $$\gr(G)=\underline{\gr}(G)=\overline{\gr}(G).$$
The growth of balls is completely determined by the growth of spheres. We have
\[ \gr(G)=\max\{1, \lim_{n\to\infty} |S_G(o,n)|^{1/n}\},\] and a similar identity for the upper and lower growth. 

For general rooted graphs, even trees, of bounded degree, there is no reason why the graph should have growth. In contrast, vertex transitive graphs trivially have growth. Introducing unimodular random rooted graphs can be done in various ways and here we choose the most pragmatic version, picking the Mass Transport Principle (that we use anyways later) as the definition. An in depth introduction to unimodular random rooted graphs can be found in \cite{AldousLyons}. 

Let $\mathcal G_d$ denote the space of rooted, connected graphs with degree bound $d$. A point in this space is a rooted, connected graph up to rooted isomorphism. One can define a metric on $\mathcal G_d$ as 
$$d((G_{1},o_{1}),(G_{2},o_{2}))= \inf \frac{1}{k+1}$$
over $k$ such that the $k$-ball in $G_1$ around $o_{1}$ is isomorphic to the $k$-ball in $G_2$ around $o_{2}$ (as rooted graphs). Endowed with this metric, $\mathcal G_d$ becomes a compact, totally disconnected space. 

A random rooted graph $(G,o)$ is \emph{unimodular} if it satisfies the \emph{Mass Transport Principle}. This asserts that for every measurable non-negative real valued function $F$ defined on the set of triples $(G,x,y)$, where $G$ is a bounded degree graph and $x,y$ are vertices of $G$, we have
\[ \mathbb E\left(\sum_{x\in V(\mathcal G)}F(o,x)\right)=\mathbb E\left(\sum_{x\in V(\mathcal G)}F(x,o)\right),\] provided that one of the sides is finite. A unimodular random rooted tree  is a unimodular random rooted graph that is a tree almost surely.
It is convenient to think of the above identity as the fact that for any paying scheme, the expected mass ``sent out" from the root is equal to the expected mass received by the root. 

We say that the unimodular random rooted graph $(\mathcal G,o)$ is ergodic if one can not obtain it as a nontrivial convex combination of other unimodular random rooted graphs. Since the upper and lower growth of a graph are measurable functions on $\mathcal G_d$ that do not depend on the choice of the root, they both must be constants by ergodicity, almost surely. 

Unimodular random rooted graphs admit a strong statistical type of homogeneity, and as such, they tend to behave like vertex transitive graphs. So it is reasonable to ask whether they have growth. This was a question of Kaimanovich answered negatively by Adam Timar in \cite{Timar14}. Timar constructed an example of a unimodular random rooted tree  $(\mathcal T,o)$ such that $\underline{\gr}(\mathcal T)=1$ while $\overline{\gr}(\mathcal T)=c>1$. The example in
\cite{Timar14} is a one ended tree with a hierarchy of fast and slow growing finite regions of super-exponentially growing size. The fact that the resulting graph is one ended seemed necessary in the construction and we were unable to find any example of a unimodular random rooted tree with more than one end and no growth. As we realized, for one ended trees there is a natural barrier on the upper growth. We now restate and prove Theorem \ref{thm:HFGrowth}. 

\begin{thm}\label{thm:OneEnded}
Let $(\mathcal T,o)$ be a one ended unimodular random rooted tree  of degree at most $d$. Then $\overline{\gr}(\mathcal T)\leq \sqrt{d-1}$. 
\end{thm}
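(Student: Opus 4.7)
The plan is to bound the first moment $\mathbb E[|B(o,n)|]$ by a polynomial multiple of $(d-1)^{n/2}$ and then conclude via Markov's inequality plus Borel--Cantelli. After passing to the ergodic decomposition I may assume $(\mathcal T,o)$ is ergodic, so that $\overline{\gr}(\mathcal T)$ is an almost sure constant.

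The unique end of $\mathcal T$ determines a parent map $p$. I set $v_k=p^k(o)$, let $D(v)$ denote the descendant subtree of $v$, and write $N(v,m)=|\{w\in D(v):d(v,w)=m\}|$. Each vertex has at most $d-1$ children, so $N(v,m)\le(d-1)^m$ deterministically. Applying the MTP to $F(x,y)=\mathbf 1[y=p^j(x)]$ gives $\mathbb E[N(o,j)]=1$ for every $j$, and the weighted version $F(x,y)=\mathbf 1[y=p^j(x)]\cdot N(y,m)$ yields
\[
\mathbb E[N(v_j,m)]=\mathbb E[N(o,j)\,N(o,m)].
\]

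The combinatorial heart of the argument is the identity
\[
|B(o,n)|=\sum_{j=0}^{n}N(v_j,n-j)+\sum_{j=0}^{n-1}N(v_j,n-1-j),
\]
obtained by decomposing each $w\in B(o,n)$ by its lowest common ancestor with $o$: if $\mathrm{LCA}(o,w)=v_k$ and $d(o,w)=D\le n$, then $w\in D(v_j)$ exactly for $j\ge k$ with $d(v_j,w)=j+D-2k$, and a unique such $j$ realises $d(v_j,w)=n-j$ or $n-1-j$ according to the parity of $n-D$. Combining this identity with the MTP computation above, Cauchy--Schwarz, and the deterministic bound $N(o,k)^2\le(d-1)^k N(o,k)$, one gets
\[
\mathbb E[N(o,j)\,N(o,m)]\le\sqrt{\mathbb E[N(o,j)^2]\,\mathbb E[N(o,m)^2]}\le (d-1)^{(j+m)/2},
\]
so that $\mathbb E[|B(o,n)|]\le (2n+1)(d-1)^{n/2}$.

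Finally, for any $\varepsilon>0$, Markov's inequality produces
\[
\P[\,|B(o,n)|>(\sqrt{d-1}+\varepsilon)^n\,]\le (2n+1)\left(\tfrac{\sqrt{d-1}}{\sqrt{d-1}+\varepsilon}\right)^n,
\]
which is summable in $n$. Borel--Cantelli then gives $\overline{\gr}(\mathcal T)\le \sqrt{d-1}+\varepsilon$ almost surely, and letting $\varepsilon\to 0$ finishes the proof. I expect the main conceptual step to be spotting the LCA/parity identity for $|B(o,n)|$; once that is in hand, the remaining estimates are routine applications of the Mass Transport Principle and elementary inequalities.
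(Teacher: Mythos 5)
Your proof is correct and follows essentially the same route as the paper: mass transport shows the expected number of $m$-descendants of the root is $1$, the ball is decomposed according to ancestors $v_j=p^j(o)$ of the root, and the resulting products of descendant counts are bounded by $(d-1)^{n/2}$ before applying Markov and Borel--Cantelli. The only (cosmetic) differences are that you record an exact LCA/parity identity for $|B(o,n)|$ where the paper settles for an upper bound on spheres, and you bound $\mathbb E[N(o,j)N(o,m)]$ via Cauchy--Schwarz plus the deterministic bound $N(o,k)\le (d-1)^k$, where the paper splits the convolution sum at $k=r/2$ and applies the deterministic bound to the smaller index.
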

\begin{proof}
Orient the edges of the tree $\mathcal T$ towards its unique end $\xi$. Call a vertex $y$ an \emph{$r$-ascendant} of $x$ if there is an oriented path of length $r$ from $x$ to $y$ and an \emph{$r$-descendant} of $x$ if $x$ is an $r$-ascendant of $y$. 

For every $r\geq 0$ consider the mass transport function $F_{r}\colon V(\mathcal T)\times V(\mathcal T)\to \mathbb R_{\geq 0}$:
\[F_r(x,y)=\begin{cases} 1 & \text{ if } y \text{ is the }r\text{-ascendant of } x,\\
0 & \text{ otherwise.}
\end{cases}\]
Under this paying scheme, the total outpay of every vertex is $1$. Let $f_r(y):=\sum_{x\in V(\mathcal T)}F_r(x,y)$ be the total income of $y$. Then $f_r(y)$ equals the number of $r$-descendants of $y$. 

By the mass transport principle, we have
\[ \mathbb E\left( f_r(o)\right)=\mathbb E\left( \sum_{y\in V(\mathcal T)}F_r(o,y)\right)=1.\]
We use $f_r$ to construct a new mass transport function $E_r$:
\[E_r(y,z)=\begin{cases}
f_{r-k}(y) & \text{ if }y\text{ is the }k\text{-ascendant of }z\\
0 & \text{otherwise,}\\\end{cases}\]
with the convention that $f_j(y)=0$ for $j<0$.
We claim that the total income $\sum_{y\in V(\mathcal T)}E_r(y,z)$ is an upper bound for $|S_\mathcal{T}(z,r)|.$ 
Indeed, every point of the $r$-sphere around $z$ is a $(r-k)$-descendant of the $k$-ascendant of $z$ for some $k=0,\ldots,r$. 

We now estimate the expected total outpay of the root:
\begin{align*}\mathbb E\left(\sum_{z\in V(\mathcal T)} E_r(o,z)\right)&=\mathbb E\left(\sum_{k=0}^r f_k(o)f_{r-k}(o)\right)\\
&\ll \mathbb E\left(\sum_{k=1}^{\lfloor r/2\rfloor}(d-1)^kf_{r-k}(o)+\sum_{k=\lfloor r/2\rfloor+1}^r f_{k}(o)(d-1)^{r-k}\right)\\
&=\sum_{k=1}^{\lfloor r/2\rfloor}(d-1)^k\mathbb E(f_{r-k}(o))+\sum_{k=\lfloor r/2\rfloor+1}^r \mathbb E(f_{k}(o))(d-1)^{r-k}\ll r(d-1)^{r/2}.
\end{align*}

By the mass transport principle we have
\[\mathbb E(|S_{\mathcal T}(o,r)|)\leq \mathbb E\left(\sum_{z\in V(\mathcal T)} E_r(o,z)\right)\ll  r(d-1)^{r/2}.\]
Let $\alpha>\sqrt{d-1}$. By Markov's inequality we get
\[ \mathbb P\left( |S_{\mathcal T}(o,r)|\geq \alpha^r\right)\ll r\left(\frac{\sqrt{d-1}}{\alpha}\right)^r.\]
By the Borel-Cantelli lemma we deduce that the event that $|S_{\mathcal T}(o,r)|\geq \alpha^r$ happens only finitely many times almost surely. Therefore $\overline{\gr}(\mathcal T)\leq \alpha.$ We prove the theorem by letting $\alpha$ go to $\sqrt{d-1}.$
\end{proof}

\section{The $2$-$3$--method for percolated trees}\label{sec:Unim23}

In this section we prove Theorem \ref{thm:Exponent} with an extra ergodicity statement. To do this, we introduce and prove the 2-3 method, using Mass Transport. 

Let $\cT_d$ be the $d$-regular tree with a root $o$ and let $\cE$ be an invariant edge percolation. For any vertex $v\in \cT_d$ let $\cE(v)$ be the cluster of $v$ 
(i.e. the connected component of $v$ in the graph whose edge set is $\cE$).
Finally, let $X_n$ be the lazy random walk on $\cT_d$ starting at $o$. Our goal in this section is to prove that the limit 
$$\rho_{\cE}:=\mathbb{P}[X_{2n}\in \cE(o)]^{1/2n}$$
always exists. To motivate this statement one may observe that when we identify $\cT_d$ with the Cayley graph of the free group $F_{d/2}$ and the connected components of $\cE$ happen to be orbits of a subgroup $H\subset F_{d/2}$ then the limit above is nothing else that the spectral radius of the lazy random walk on the coset graph $F_{d/2}/H$. For a general percolation it is not clear why such limit should exist, as the connected components of $\cE$ aren't as homogeneous as cosets of a subgroups. Still, the invariance of the percolation turns out to be enough. 

The following theorem is essentially Theorem \ref{thm:Exponent} from the Introduction, adding a further ergodicity statement. 

\begin{thm}\label{ThmA}
The limit $$\lim_{n\to\infty}\mathbb{P}[X_{2n}\in \cE(o)]^{\frac{1}{2n}}$$ exists.
Moreover if $\cE$ has indistinguishable clusters, then the limit is almost surely equal to $$\rho^\cE:=\lim_{n\to\infty}(\mathbb E[1_{X_{2n}\in \cE(o)}])^{1/2n}$$
\end{thm}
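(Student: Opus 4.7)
Set $a_k := \P[X_k \in \cE(o)]$ and $P_k := \P[X_k \in \cE(o) \mid \cE]$; I would first prove existence of $\lim_n a_n^{1/n}$, from which existence of $\lim_n a_{2n}^{1/2n}$ is immediate. The first step is to use the Mass Transport Principle together with reversibility of the lazy walk to establish the averaged supermultiplicative bound
\[ a_{m+n} \;\ge\; \E[P_m P_n]. \]
The argument lower bounds $\P[X_{m+n} \in \cE(o) \mid \cE]$ by restricting to trajectories passing through $\cE(o)$ at the intermediate time $m$; the Markov property writes this restricted probability as $\sum_{u \in \cE(o)} p_m(o, u) g_n(u)$ with $g_n(u) := \P[X_n \in \cE(u) \mid X_0 = u, \cE]$; finally Mass Transport with payment $F(x, y) = p_m(x, y) \mathbf{1}_{y \in \cE(x)} g_n(y)$ and reversibility $p_m(x, y) = p_m(y, x)$ rebalance the sum into $\E[g_n(o) P_m] = \E[P_m P_n]$, using the identity $g_n(o) = P_n$.

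Jensen then yields $a_{2m} \ge \E[P_m^2] \ge a_m^2$. A three-checkpoint variant of the Mass Transport argument should give the tripling inequality $a_{3m} \ge a_m^3$ (the subtle point being that iterated Mass Transport naturally produces joint-return terms $\E[P_m R_{m,m}]$ with $R_{m,m} := \P[X_m, X_{2m} \in \cE(o) \mid \cE]$, and one must symmetrize further to compare these with $\E[P_m^3]$). Iterating one gets $a_{km} \ge a_m^k$ for every $k = 2^a 3^b$, so $\alpha_n := a_n^{1/n}$ satisfies $\alpha_{km} \ge \alpha_m$ along such dilations.

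The $2$-$3$ step now interpolates. Because $\log 2/\log 3$ is irrational, the multiplicative gaps of $\{2^a 3^b\}$ tend to $1$, so for every $m_0$ and every large $n$ there exists $k = m_0 \cdot 2^a 3^b$ with $k \le n$ and $n/k = 1 + o(1)$. Laziness gives $a_{n+r} \ge 2^{-r} a_n$ (the walker may stay put for $r$ extra steps), which translates to $\alpha_{n+r} \ge \alpha_n (1 - o(1))$ whenever $r = o(n)$. Chaining the super-power bound with the laziness bound yields $\liminf_n \alpha_n \ge \alpha_{m_0}$ for every $m_0$, so $\lim_n \alpha_n = \sup_m \alpha_m$ exists. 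For the almost-sure identification under indistinguishable clusters, the random sequence $P_{2n}$ is a measurable function of the rooted cluster $(\cE(o), o)$, so its asymptotic growth rate is a cluster-invariant random variable; indistinguishability forces it to be almost surely constant, and a Fatou/bounded-convergence comparison with the averaged limit identifies this constant with $\rho^\cE$.

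The principal obstacle is the $2$-$3$ step itself. The supermultiplicativity $a_{m+n} \ge \E[P_m P_n]$ holds only after averaging over $\cE$, not pathwise, so classical Fekete or Kingman subadditive ergodic arguments cannot be invoked directly. Doubling alone (using only $a_{2m} \ge a_m^2$) leaves a factor of $\sqrt{2}$ gap in the laziness interpolation, so the multiplicative density of $\{2^a 3^b\}$ is essential; establishing the tripling inequality $a_{3m} \ge a_m^3$, in turn, demands a careful double application of Mass Transport rather than a direct iteration of the doubling step.
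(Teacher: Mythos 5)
The central difficulty you have missed is that the first assertion of the theorem is a \emph{quenched}, almost-sure statement: for a general invariant percolation (no indistinguishability assumed), the random quantity $P_{2n}=\P[X_{2n}\in\cE(o)\mid\cE]$ has $\lim_n P_{2n}^{1/2n}$ existing almost surely. (This is why the ``Moreover'' clause can speak of the limit being ``almost surely equal'' to the deterministic $\rho^\cE$.) Your plan averages over $\cE$ at the very first step and works entirely with $a_n=\E[P_n]$; even if carried out it would only produce the annealed limit, and the concluding appeal to indistinguishability cannot repair this: indistinguishability makes $\limsup_n P_{2n}^{1/2n}$ and $\liminf_n P_{2n}^{1/2n}$ each a.s.\ constant, but gives no reason for the two constants to coincide, and knowing that $\lim_n \E[P_{2n}]^{1/2n}$ exists forces only the one-sided bound $\limsup_n P_{2n}^{1/2n}\le\rho^\cE$ (Markov plus Borel--Cantelli), not a matching lower bound on the liminf. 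The paper's $2$--$3$ method is designed precisely to avoid this averaging: it proves the \emph{pointwise} inequalities $p_{2l}^\cE(x)^2\le f_1^\cE(x,2l)\,p_{4l}^\cE(x)$ and $p_{2l}^\cE(x)^3\le f_2^\cE(x,2l)\,p_{6l}^\cE(x)$ by Cauchy--Schwarz, where the correction factors $f_i^\cE$ are exactly the quantities whose expectation Mass Transport evaluates to $1$; Borel--Cantelli then makes the accumulated multiplicative error subexponential a.s.\ along the scales $2^{p}3^{q}l$, and the logarithmic density of these scales finishes the quenched argument. Your inequality $a_{m+n}\ge\E[P_mP_n]$ is the averaged shadow of this, and the averaging is exactly what loses the theorem.

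Second, even the annealed tripling inequality $a_{3m}\ge a_m^3$, which your interpolation genuinely needs (doubling alone does not give log-dense scales), is not established and is not a routine double Mass Transport. Writing $g_m(u)=\P[X_m\in\cE(u)\mid X_0=u,\cE]$, the natural computation yields $a_{3m}\ge\E\bigl[g_m(o)\sum_{u\in\cE(o)}p_m(o,u)g_m(u)\bigr]$, and bounding this below by $\E[P_m]^3$ requires a positivity or correlation input that Jensen does not supply. Tellingly, the paper never proves a clean annealed supermultiplicativity at all: it derives the annealed limit \emph{from} the quenched one (under indistinguishability), via the AM--GM estimate $\E\bigl[p_{2l}^\cE(x)/p_{2^{p+1}l}^\cE(x)^{1/2^p}\bigr]\le1$, which also does the work your vague ``Fatou/bounded-convergence comparison'' would have to do. Your $2$--$3$ interpolation via laziness and the Mass-Transport-plus-reversibility idea are sound in spirit and close to the paper's mechanics, but the proof must be run pointwise with controlled multiplicative errors, not in expectation.
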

For any $x,y\in \cT_d$ write $p_{k}(x,y)$ for the probability that the lazy random walk starting at $x$ ends in $y$ in $k$ steps. Let $p^{\cE}_k(x):=\sum_{y\in\cE(x)}p_k(x,y).$
We define two functions $f^\cE_1, f^\cE_2\colon \cT_d \times 2\N \to \R_{\geq 0}$.
\begin{align*}f^\cE_1(x,2l):=&\sum_{y\in \cE(x)} p_{2l}(x,y)p^\cE_{2l}(y)^{-1}\\
f^\cE_2(x,2l):=& p_{2l}^\cE(x)\sum_{y\in \cE(x)}p_{2l}(x,y)\left(\sum_{z\in \cE(x)} p_{2l}(y,z)p^{\cE}_{2l}(z)\right)^{-1}.\\
\end{align*}

\begin{lem}\label{lem-MTP1} For any $x\in \cT_d$
\begin{align*}
\mathbb E[f^\cE_1(x,2l)]=&1,\\
\mathbb E[f^\cE_2(x,2l)]=&1.\\
\end{align*}
\end{lem}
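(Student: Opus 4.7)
The plan is to prove each identity by a single application of the Mass Transport Principle (MTP), which is available because $\cE$ is $\Aut(\cT_d)$-invariant and $\cT_d$ is a unimodular vertex-transitive graph. For each of the two identities I will design a non-negative $\Aut(\cT_d)$-invariant transport $F_i\colon V(\cT_d)\times V(\cT_d)\to \R_{\geq 0}$ supported on pairs $(x,y)$ with $y\in\cE(x)$, arranged so that exactly one of the total out-payment $\sum_y F_i(x,y)$ and the total in-payment $\sum_y F_i(y,x)$ collapses deterministically to $1$, while the other is precisely $f_i^\cE(x,2l)$. Two symmetries are used throughout: reversibility of the lazy walk, $p_{2l}(x,y)=p_{2l}(y,x)$, and the fact that $y\in\cE(x)\Leftrightarrow x\in\cE(y)$, with $\cE(y)=\cE(x)$ in that case.

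For the first identity I take
\[G_1(x,y)=\frac{p_{2l}(x,y)}{p_{2l}^\cE(y)}\,\mathbf 1_{y\in\cE(x)}.\]
By definition $\sum_y G_1(x,y)=f_1^\cE(x,2l)$. Swapping the roles of $x$ and $y$ and using reversibility makes the in-payment collapse: $\sum_y G_1(y,x)=p_{2l}^\cE(x)^{-1}\sum_{y\in\cE(x)}p_{2l}(x,y)=1$. The MTP then gives $\E[f_1^\cE(x,2l)]=1$. For the second identity I take
\[F_2(x,y)=\frac{p_{2l}(x,y)\,p_{2l}^\cE(y)}{\sum_{z\in\cE(x)} p_{2l}(x,z)\,p_{2l}^\cE(z)}\,\mathbf 1_{y\in\cE(x)}.\]
Since the denominator depends only on $x$, summing over $y$ gives $\sum_y F_2(x,y)=1$ identically. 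Computing the in-payment, and using $\cE(y)=\cE(x)$ together with reversibility to rewrite the numerator factor $p_{2l}(y,x)$ as $p_{2l}(x,y)$, produces exactly the expression $f_2^\cE(x,2l)$. Another application of MTP finishes the proof.

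I do not anticipate a real obstacle here: the lemma is a bookkeeping exercise with the MTP, once one notices the correct shape of the weights. The only design choice requiring thought is the denominator in $F_2$, which is arranged so that the out-payment telescopes to $1$ while leaving the in-payment as the double sum defining $f_2^\cE$. No ergodicity or indistinguishability hypothesis is needed at this stage; those will enter later when upgrading from averaged statements of this type to pointwise ones.
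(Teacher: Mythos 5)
Your proposal is correct and is essentially the paper's own proof: your $F_2$ is literally the transport the paper uses, and your $G_1$ is just the transpose of the paper's $F_1$ (the paper arranges the out-payment to be $1$ and the in-payment to be $f_1^\cE$, you do the reverse), which is an immaterial relabeling since the Mass Transport Principle is symmetric. Both arguments rest on the same two observations you identify — reversibility $p_{2l}(x,y)=p_{2l}(y,x)$ and $\cE(y)=\cE(x)$ for $y\in\cE(x)$ — so nothing further is needed.
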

\begin{proof}
Let $F_1, F_2\colon \cT_d\times \cT_d\to \R_{\geq 0}$ be defined by 
\[F_1(x,y):=p_{2l}(x,y)p_{2l}^\cE(x)^{-1}, \qquad F_2(x,y):=p_{2l}(x,y)p_{2l}^\cE(y)\left(\sum_{z\in \cE(x)}p_{2l}(x,z)p_{2l}^\cE(z)\right)^{-1}.\]

We easily check that $\sum_{y\in\cE(x)} F_i(x,y)=1$ for $i=1,2$ and every $x\in \cT_d$. By the mass transport principle we get 
\[ 1=\mathbb E\left[\sum_{y\in \cE(x)} F_1(y,x)\right]=\mathbb E\left[\sum_{y\in \cE(x)} F_2(y,x)\right].\]
The lemma follows, since 
\[f^\cE_1(x,2l)=\sum_{y\in \cE(x)} F_1(y,x),\qquad f^\cE_2(x,2l)=\sum_{y\in \cE(x)} F_2(y,x).\]
\end{proof}
\begin{lem}\label{lem23} We have 
\begin{align*}
p_{2l}^\cE(x)^2\leq& f_1^\cE(x,2l)p_{4l}^\cE(x),\\
p_{2l}^\cE(x)^3\leq& f_2^\cE(x,2l)p_{6l}^\cE(x).
\end{align*}
\end{lem}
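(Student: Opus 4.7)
The plan is to read both inequalities as instances of Cauchy--Schwarz, after first producing sharp lower bounds on $p_{4l}^\cE(x)$ and $p_{6l}^\cE(x)$ by discarding intermediate walk positions outside the cluster.

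For the first inequality, I would start from the Chapman--Kolmogorov identity
\[p_{4l}(x,z)=\sum_{y\in \cT_d}p_{2l}(x,y)p_{2l}(y,z)\geq \sum_{y\in \cE(x)}p_{2l}(x,y)p_{2l}(y,z).\]
Summing $z$ over $\cE(x)$ and using that $\cE(y)=\cE(x)$ whenever $y\in\cE(x)$, this gives the key sub-Markov bound
\[p_{4l}^\cE(x)\geq \sum_{y\in \cE(x)}p_{2l}(x,y)\,p_{2l}^\cE(y).\]
Then I apply Cauchy--Schwarz to the decomposition
\[p_{2l}^\cE(x)=\sum_{y\in \cE(x)}p_{2l}(x,y)=\sum_{y\in \cE(x)}\sqrt{\frac{p_{2l}(x,y)}{p_{2l}^\cE(y)}}\cdot\sqrt{p_{2l}(x,y)\,p_{2l}^\cE(y)},\]
which yields
\[p_{2l}^\cE(x)^2\leq \Bigl(\sum_{y\in \cE(x)}\frac{p_{2l}(x,y)}{p_{2l}^\cE(y)}\Bigr)\Bigl(\sum_{y\in \cE(x)}p_{2l}(x,y)\,p_{2l}^\cE(y)\Bigr)\leq f_1^\cE(x,2l)\,p_{4l}^\cE(x).\]

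For the second inequality, the same mechanism is iterated once more. Expanding $p_{6l}(x,w)$ over two intermediate vertices and restricting both to $\cE(x)$ gives
\[p_{6l}^\cE(x)\geq \sum_{y\in \cE(x)}p_{2l}(x,y)\,g(y),\qquad g(y):=\sum_{z\in \cE(x)}p_{2l}(y,z)\,p_{2l}^\cE(z).\]
Now I apply Cauchy--Schwarz with weights $\sqrt{p_{2l}(x,y)/g(y)}$ and $\sqrt{p_{2l}(x,y)\,g(y)}$ to obtain
\[p_{2l}^\cE(x)^2\leq \Bigl(\sum_{y\in \cE(x)}\frac{p_{2l}(x,y)}{g(y)}\Bigr)\Bigl(\sum_{y\in \cE(x)}p_{2l}(x,y)\,g(y)\Bigr)\leq \Bigl(\sum_{y\in \cE(x)}\frac{p_{2l}(x,y)}{g(y)}\Bigr)\,p_{6l}^\cE(x).\]
Multiplying both sides by $p_{2l}^\cE(x)$ and recognising the right-hand factor as $f_2^\cE(x,2l)$ finishes the proof.

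There is no real obstacle here: both bounds are simple manifestations of Cauchy--Schwarz, and the only ingredient beyond it is the obvious fact that restricting the intermediate step of a two- or three-step Chapman--Kolmogorov expansion to the cluster $\cE(x)$ decreases the transition probability. The substance of the method lies in having chosen the auxiliary functions $f_1^\cE,f_2^\cE$ so that Lemma \ref{lem-MTP1} delivers mean one; Lemma \ref{lem23} is precisely the complementary variance-type estimate that lets one chain $p^\cE_{2l}$, $p^\cE_{4l}$, $p^\cE_{6l}$ together and eventually run the $2$-$3$ method to extract existence of the limit.
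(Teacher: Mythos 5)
Your proof is correct and is essentially the paper's own argument: both inequalities follow from Cauchy--Schwarz applied to the same splitting of $p_{2l}(x,y)$, with the Chapman--Kolmogorov restriction to the cluster giving the bounds $\sum_{y\in\cE(x)}p_{2l}(x,y)p_{2l}^\cE(y)\leq p_{4l}^\cE(x)$ and $\sum_{y\in\cE(x)}p_{2l}(x,y)g(y)\leq p_{6l}^\cE(x)$. The paper merely leaves those two intermediate-vertex bounds implicit, which you have usefully spelled out.
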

\begin{proof}
By Cauchy-Schwartz 
\begin{align*} p_{2l}^\cE(x)^2\leq& \left(\sum_{y\in\cE(x)} p_{2l}(x,y)p_{2l}^\cE(y)^{-1}\right)\left(\sum_{y\in\cE(x)} p_{2l}(x,y)p_{2l}^\cE(y)\right)\leq f^\cE_1(x,2l) p_{4l}^\cS(x).\\
p_{2l}^\cE(x)^2\leq& \left(\sum_{y\in\cE(x)} p_{2l}(x,y)\left(\sum_{z\in \cE(x)} p_{2l}(y,z)p^{\cE}_{2l}(z)\right)^{-1}\right)\left(\sum_{y\in\cE(x)} p_{2l}(x,y)\left(\sum_{z\in \cE(x)} p_{2l}(y,z)p^{\cE}_{2l}(z)\right)\right)\\
\leq&  p_{2l}^\cE(x)^{-1}f^\cE_2(x,2l) p_{6l}^\cE(x).
\end{align*}
\end{proof}
\begin{proof}[Proof of Theorem \ref{ThmA}]
Let $l,p,q\in\N$. By Lemma \ref{lem23} and a simple induction, for every $x\in X$ we have 
\begin{equation}\label{eq-Inductive23}p_{2l}^\cE(x)^{2^p3^q}\leq C_{p,q}(x,2l) p_{2^{p+1}3^ql}^\cE(x),\end{equation} where \[C_{p,q}(x,2l):=\prod_{i=0}^{p-1}f_1^\cE(x,2^{i+1}l)^{2^{p-1-i}3^q}\prod_{j=0}^{q-1}f_2^\cE(x,2^{p+1}3^jl)^{3^{q-1-j}}.\]
By Lemma \ref{lem-MTP1} the sums $ \sum_{l=1}^\infty l^{-2} f_i^{\cE}(x,2l), i=1,2$ converge a.e. In particular, for almost every $x$ there exists an $l_0=l_0(x)$ such that $f_i^\cE(x,2l)\leq l^2$ for $l\geq l_0$. Hence, for $l\geq l_0$ we will have 
\begin{align*}C_{p,q}(x,2l)\leq& \prod_{i=0}^{p-1} (2^{i+1}l)^{2^{p-i}3^q}\prod_{j=0}^{q-1}(2^{p+1}3^jl)^{2\cdot 3^{q-1-j}},\\
2^{-p}3^{-q}\log C_{p,q}(x,2l)\leq& \sum_{i=0}^{p-1} \frac{\log(2^{i}l)}{2^{i}}+\sum_{j=0}^{q-1} \frac{\log(2^{p}3^jl)}{2^p3^j}\\
\leq& 50+2\log l.
\end{align*}
Let $l_1\geq l_0$. Put $\underline\rho_s^\cE:=\liminf_{l\to\infty}p^{\cE}_{2l}(x)^{\frac{1}{2l}}$ and $ \overline\rho_s^\cE:=\limsup_{l\to\infty}p^{\cE}_{2l}(x)^{\frac{1}{2l}}$. Since the set $2^{p}3^ql_1$ becomes dense on the logarithmic scale as $p,q\to\infty$, we have \begin{align*}
\underline{\rho}_x^\cE=\liminf_{l\to\infty}p^{\cE}_{2l}(x)^{\frac{1}{2l}}=&\liminf_{p,q\to\infty} p^{\cE}_{2^{p+1}3^ql_1}(x)^{1/2^{p+1}3^{q}l_1^{1}}\\
\geq& \liminf_{p,q\to\infty} C_{p,q}(x,2^{p+1}3^ql_1)^{-1/2^{p+1}3^ql_1} p_{2l_1}^\cE(x)^{1/2l_1}\\
\geq& l_1^{1/l_1} p_{2l_1}^\cE(x)^{1/2l_1}.\end{align*} By taking the limit along a sequence $l_1\to \infty$ such that $p_{2l_1}^\cE(x)^{{1}/{2l_1}}\to \overline\rho_x^\cE$ we obtain the inequality $\underline{\rho}_x^\cE\geq \overline\rho_x^\cE$. This proves that $\lim_{l\to\infty}p_{2l}^\cE(x)^{1/2l}$ exists almost surely. 

We now prove the second part of the theorem. Assume $\cE$ has indistinguishable clusters and define $\rho^{\cE}(x):=\lim_{l\to\infty}p_{2l}^\cE(x)^{1/2l}$. This function is constant of the clusters of $\cE$, so by indistinguishability is almost surely equal to some constant $\rho_0$. 
By (\ref{eq-Inductive23}) and the inequality between the arithmetic and the geometric mean we get \begin{align*}\frac{p_{2l}^\cE(x)}{p_{2^{p+1}l}^\cE(x)^{1/2^p}}\leq C_{p,0}(x,2l)^{1/2^p}=\left(\prod_{i=0}^{p-1} f_1^\cE(x,2^{i+1}l)^{2^{p-1-i}}\right)^{1/2^p}
\leq \frac{1}{2^p}\left(1+\sum_{i=0}^{p-1} 2^{p-1-i}f_1^\cE(x,2^{i+1}l)\right).
\end{align*}
We take expectation and use Lemma \ref{lem-MTP1} to get 
\[\mathbb E\left[\frac{p_{2l}^\cE(x)}{p_{2^{p+1}l}^\cE(x)^{1/2^p}}\right]\leq 1.\]
Upon taking the limit as $p\to \infty$ we find that $\mathbb E\left[ p_{2l}^\cE(x)\right]\leq \rho_0^{2l}$. Therefore, $\limsup_{l\to\infty} \mathbb E\left[ p_{2l}^\cE(x)\right]\leq \rho_0$. By Fatou lemma we have $\liminf_{l\to\infty} \mathbb E\left[ p_{2l}^\cE(x)\right]\geq \rho_0$, so $\lim_{l\to\infty} \mathbb E\left[ p_{2l}^\cE(x)\right]= \rho_0$ and the theorem is proved.
\end{proof}

\section{The spine of a unimodular random rooted tree}\label{sec:Spine}

In this section we introduce the spine and prove Theorem \ref{thm:HFGrowth spine}. 

Let $T$ be a tree. Call an edge $e$ \emph{weak} if one side of $T\setminus e$ is finite. The spine of $T$, denoted ${\rm spine}(T)$, is the unique infinite connected component of what remains of $T$ after throwing out all the weak edges.
The spine of a tree $T$ is naturally a sub-tree of $T$. If the tree $T$ is not one ended, then $T\setminus {\rm spine}(T)$ is a union of finite trees. We will call these trees \emph{decorations}.

Let $(\mathcal T,o)$ be a unimodular random rooted tree  and let $\nu$ be the corresponding probability measure on the moduli space $\mathcal M_d$ of rooted graphs of degree bounded by $d$. Let $C=\{(G,o): G \text{ is a tree and }o\text{ is in the spine}\}$. This is a measurable set and $\nu(C)>0$ unless $(\mathcal T,o)$ is one ended a.s.. We define the spine of $(\mathcal T,o)$ as $({\rm spine}(\mathcal T_c),o_c)$ where $(\mathcal T_c,o_c)$ is chosen according to $\nu|_C$. It is a unimodular random rooted graph.

\begin{thm}\label{thm:spineGrowth}
Let $(\cT,o)$ be a unimodular random rooted tree  of degree bounded by $d$. Let $(\cT',o')$ be the spine of $(\cT,o)$. If $\cT'$ is regular, then $\overline{gr}(\cT)=\overline{gr}(\cT')$. In general, we have $\overline{gr}(\cT)\leq \max\{ (d-1)^{1/2}, \overline{gr}(\cT')\}$. In particular, if $\overline{gr}(\cT)\geq (d-1)^{1/2}$ then $\overline{gr}(\cT)=\overline{gr}(\cT').$
\end{thm}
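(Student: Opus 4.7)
The plan is to decompose each ball in $\cT$ into its spine part and the decoration parts hanging off the spine, and to bound each using the Mass Transport Principle combined with Borel--Cantelli. For the easy direction $\overline{gr}(\cT')\leq \overline{gr}(\cT)$, I would first observe that $\cT'$ embeds isometrically in $\cT$: if two spine vertices $x,y$ were linked by a $\cT$-path crossing a weak edge $e$, the finite side of $\cT\setminus e$ would contain $x$ or $y$, and the two rays of any bi-infinite geodesic through that endpoint would be forced to either both cross $e$ (impossible in a tree) or to stay in the finite side (impossible for a ray). Hence $|B_{\cT'}(o',n)|\leq |B_\cT(o',n)|$, and taking $n$-th roots gives the inequality.

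For the upper bound, every $v\in\cT\setminus\cT'$ sits in a unique finite decoration $\overline{D}(u)$ attached to a single spine vertex $u$; writing $D_k(u)$ for the vertices of $\overline{D}(u)\setminus\{u\}$ at $\overline{D}(u)$-distance at most $k$ from $u$, and $S_r^D(u)=D_r(u)\setminus D_{r-1}(u)$, one has, assuming $o\in\cT'$ (the general case costs only a bounded additive offset),
\[|B_\cT(o,n)|=|B_{\cT'}(o,n)|+\sum_{j=0}^{n-1}\sum_{u\in S_{\cT'}(o,j)}|D_{n-j}(u)|.\]
The key input is the mass transport $F_r(x,y)=\mathbf{1}[y\in\cT',\,x\in\overline{D}(y),\,d_\cT(x,y)=r]$, whose outpay at $o$ is $\mathbf{1}[d_\cT(o,\cT')=r]$ and whose income at $o$ equals $\mathbf{1}_{o\in\cT'}|S_r^D(o)|$. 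The MTP yields $\mathbb{E}[|S_r^D(o)|\mathbf{1}_{o\in\cT'}]\leq 1$ for every $r\geq 1$, which says that decorations are tight in a strong averaged sense.

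Markov at threshold $(d-1)^{r/2}$ then gives $\mathbb{P}[|S_r^D(o)|>(d-1)^{r/2}\mid o\in\cT']\ll (d-1)^{-r/2}$. The tail is summable, so Borel--Cantelli together with a countable union over spine vertices produces, almost surely, a finite threshold $R_u$ at every $u\in\cT'$ with $|S_r^D(u)|\leq (d-1)^{r/2}$ for all $r\geq R_u$, and hence $|D_k(u)|\leq (d-1)^{R_u+1}+C(d-1)^{k/2}$. Substituting this and splitting the sum into a ``typical'' part driven by $C(d-1)^{k/2}$ and an ``exceptional'' part driven by $(d-1)^{R_u+1}$, the typical part sums against $|S_{\cT'}(o,j)|\leq (\overline{gr}(\cT')+\epsilon)^j$ (valid a.s. for $j$ large) to
\[\sum_j (\overline{gr}(\cT')+\epsilon)^j \cdot C(d-1)^{(n-j)/2}\ \ll\ \max\{\sqrt{d-1},\overline{gr}(\cT')+\epsilon\}^n\cdot \mathrm{poly}(n),\]
while the exceptional part is reorganised by the value of $R_u$ and estimated in expectation using the MTP identity $\mathbb{E}[\#\{u\in B_{\cT'}(o,n-r):R_u=r,u\in\cT'\}]=\mathbb{E}[\mathbf{1}_{o\in\cT'}|B_{\cT'}(o,n-r)|\mathbf{1}[R_o=r]]$, combined with the tail $\mathbb{P}[R_o\geq r]\ll (d-1)^{-r/2}$; this produces the same $\max$-rate bound. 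A final Markov plus Borel--Cantelli step transfers the resulting expectation estimate to the almost sure bound $|B_\cT(o,n)|\leq \max\{\sqrt{d-1},\overline{gr}(\cT')\}^n\cdot\mathrm{poly}(n)$.

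The main obstacle will be handling the exceptional part cleanly, since $(d-1)^{R_u}$ has infinite first moment under the Borel--Cantelli tail and there is no spine analogue of $\mathbb{E}[|S_r^D(o)|\mathbf{1}_{o\in\cT'}]\leq 1$ (only an a.s. control via $\overline{gr}(\cT')$). I plan to work on the a.s. event $|B_{\cT'}(o,n)|\leq (\overline{gr}(\cT')+\epsilon)^n$ inside the MTP identity, and to dispatch its complement via the degree bound on $|B_{\cT'}|$ paired with the exponential decay of $\mathbb{P}[R_o\geq r]$, so that the bad event remains subdominant. Once $\overline{gr}(\cT)\leq \max\{\sqrt{d-1},\overline{gr}(\cT')\}$ is established, the in-particular clause follows: $\overline{gr}(\cT)\geq \sqrt{d-1}$ forces the maximum to be $\overline{gr}(\cT')$, which together with the easy direction yields equality; the regular case collapses to this general bound, except when the spine is a line ($d'=2$), where equality follows from the two-ended URT growth statement in Theorem~\ref{thm:HFGrowth}.
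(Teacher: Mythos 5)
Your outline is fine up to and including the ``typical'' part, but the central step --- controlling the decoration contribution from vertices whose decorations are large --- has a genuine gap, and it is exactly the point you yourself flag as ``the main obstacle.'' The tail bound you can extract from the mass transport, $\mathbb{P}[R_o\geq r]\ll (d-1)^{-r/2}$, is precisely borderline: $(d-1)^{R_o}$ integrated against it diverges, so $\mathbb{E}\bigl[1_{o\in\cT'}(d-1)^{R_o}\bigr]=\infty$ is possible, and after the MTP reorganisation the exceptional part becomes an expectation of a product of the form $(d-1)^{R_o}\cdot|B_{\cT'}(o,n)|$ (or a capped variant), i.e.\ a heavy-tailed root variable times a ball size for which you have no moment control: upper growth of the spine is an almost-sure asymptotic statement with no rate, so $\mathbb{E}[|B_{\cT'}(o,n)|]$ is only bounded by $d(d-1)^{n-1}$. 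Your proposed rescue --- inserting the good event $\{|B_{\cT'}(o,n)|\leq(\overline{gr}(\cT')+\epsilon)^n\}$ inside the MTP and dispatching its complement by the degree bound times the decay of $\mathbb{P}[R_o\geq r]$ --- does not close this: on the good event the factor $(d-1)^{R_o}$ is still non-integrable (capping it by $(d-1)^{n-j}$ does not help for vertices near the root), and on the bad event you would need $\mathbb{P}[G_n^c]$ to decay exponentially at a specific rate to beat $d(d-1)^{n-1}$, which is not available. Moreover, once you put the good event inside the transport it attaches to the ``wrong'' vertex after re-rooting, so it cannot be converted into a pointwise count bound on spheres. The paper's proof contains the missing idea: bound each decoration's contribution by $\min\{w(y),(d-1)^{R-\dist(o',y)}\}\leq\bigl(w(y)(d-1)^{R-\dist(o',y)}\bigr)^{1/2}$ and then decouple by a pathwise Cauchy--Schwarz against the sphere sizes, using the transport $H_R(x,y)=w(x)\,|S_{\cT'}(x,\dist(x,y))|^{-1}1_{\dist(x,y)\leq R}$, whose expected income is only $(R+1)\mathbb{E}[w(o')]$ because the sphere size in the denominator exactly compensates the summation over the sphere; the other Cauchy--Schwarz factor, $\sum_r(d-1)^{R-r}\sum_{y\in S_{\cT'}(o',r)}|S_{\cT'}(y,r)|$, is handled almost surely via the doubling inequality $\sum_{y\in S_{\cT'}(o',r)}|S_{\cT'}(y,r)|\leq\sum_k(d-1)^{r-k}|S_{\cT'}(o',2k)|$ and the spine's upper growth. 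This decoupling of an expectation-controlled factor from an a.s.-controlled factor is what your $R_u$-threshold scheme lacks, and without it the exceptional part cannot be brought below $\max\{\sqrt{d-1},\overline{gr}(\cT')\}^n$.

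A secondary problem is your treatment of the regular case. When the spine is $d'$-regular with $3\leq d'$ and $d'-1<\sqrt{d-1}$, the theorem asserts $\overline{gr}(\cT)=d'-1$, which is strictly stronger than the general bound $\max\{\sqrt{d-1},\overline{gr}(\cT')\}$, so the regular case does not ``collapse to the general bound''; the paper proves it separately with the transport $F(x,y)=(\alpha(d'-1))^{-\dist(x,y)}w(x)$, which exploits regularity to sum the decoration weights against the exact sphere sizes. Also, for $d'=2$ you invoke the two-ended statement of Theorem~\ref{thm:HFGrowth}, but in the paper that statement is itself deduced from the regular case of the present theorem, so as written this is circular; you would need to prove the line case directly (which the same $\alpha(d'-1)$ transport does).
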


The starting point of our discussion is the following lemma.
\begin{lem}\label{decorations} Let $(\cT,o)$ be a unimodular random rooted tree  and let $(\cT',o')$ be its spine, which we assume to be non-empty almost surely. Then, the expected size of the decoration adjacent to the root $o'$ is finite.
\end{lem}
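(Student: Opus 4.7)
The plan is to apply the Mass Transport Principle directly to the original URG $(\cT,o)$. The key geometric observation is that every non-spine vertex $x$ lies in a unique decoration, which is a finite subtree of $\cT$ meeting the spine in a single vertex; call that vertex $\pi(x)$. This gives a canonical projection from non-spine vertices to the spine, and $\pi(x)$ is well-defined because any two spine vertices are joined by a path entirely inside the spine (otherwise the edge leaving the spine would bound a finite side and hence be weak, contradicting that its other endpoint is in the spine).

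The transport I would use is the most natural one: each non-spine vertex sends one unit of mass to its projection, and spine vertices send nothing. Formally, set
\[
F\bigl((T,x,y)\bigr) \;=\; \mathbf{1}[x \notin \mathrm{spine}(T)] \cdot \mathbf{1}[y = \pi(x)].
\]
The expected total outgoing mass from $o$ is simply $\P[o \notin \mathrm{spine}(\cT)] \leq 1$. The expected incoming mass at $o$ vanishes unless $o$ lies in the spine, in which case it counts exactly the total number of non-spine vertices projecting to $o$, i.e.\ the union $D(o)$ of all decorations attached to $o$. The Mass Transport Principle therefore gives
\[
\E\bigl[\mathbf{1}[o \in \mathrm{spine}(\cT)] \cdot |D(o)|\bigr] \;=\; \P[o \notin \mathrm{spine}(\cT)] \;\leq\; 1.
\]

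Since $(\cT',o')$ is, by construction, sampled from $\nu$ conditioned on the event $C=\{o \in \mathrm{spine}(\cT)\}$, dividing both sides by $\nu(C) = \P[o \in \mathrm{spine}(\cT)]$ yields
\[
\E\bigl[|D(o')|\bigr] \;\leq\; \frac{1}{\nu(C)} \;<\; \infty,
\]
which is the desired conclusion. The positivity $\nu(C) > 0$ follows from the hypothesis that the spine is non-empty almost surely together with the remark preceding the lemma (and in fact would be forced by the MTP identity above even if not assumed separately, since $\nu(C) = 0$ would force $\P[o \notin \mathrm{spine}(\cT)] = 0$, contradicting non-triviality of the spine).

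There is no real obstacle in this argument; it is essentially a one-line application of mass transport. The only points meriting care are (i) verifying that $\pi$ is well-defined and measurable, which is immediate from finiteness of decorations, and (ii) being careful that $|D(o)|$ is understood as the total size of the union of all decorations hanging off $o$ — a single spine vertex may have several decorations attached, one per non-spine neighbor — so that both sides of the MTP identity count the same thing.
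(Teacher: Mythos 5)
Your proof is correct and is essentially the paper's argument: the paper uses the transposed transport (each spine vertex sends one unit to itself and to every vertex of its adjacent decorations, so that incomes are bounded by $1$), which by the Mass Transport Principle is the same computation as yours with sender and receiver swapped. Both reduce to the observation that each decoration attaches to a unique spine vertex, and both handle the conditioning on $C$ and the positivity of $\nu(C)$ in the same way.
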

\begin{proof}
 For every vertex $v\in {\rm spine}(T)$ let $w(v)-1$ be the total number of vertices of the connected components of $T\setminus {\rm spine}(T)$ adjacent to $v$.

We define a function on pairs of vertices: $F(x,y)=1$ if $x\in {\rm spine}(T)$ and $y=x$ or $y$ is in a connected component of $T\setminus {\rm spine}(T)$ adjacent to $x$ and $F(x,y)=0$ otherwise. It is easy to see that for any $x$ in the spine we have $\sum_{y\sim x} F(x,y)=w(x)$ and for any vertex $y$ we have $\sum_{x\sim y} F(x,y)\leq 1$. Let $(\cT,o)$ be a unimodular random rooted tree  and let $(\cT',o')$ be the spine. By the mass transport principle we have
$${\mathbb P}(o\in \cT'){\mathbb E}(w(o'))={\mathbb E}\sum_{x\sim o'} F(o',x)={\mathbb E}\sum_{x\sim o'} F(x,o')\leq 1.$$
Since we assume that $\cT'$ is non empty, the probability ${\mathbb P}(o\in \cT')$ is non-zero. The lemma is proved.
\end{proof}
We are ready to prove Theorem \ref{thm:spineGrowth}.
\begin{proof}
Let us start with the easier case, when $\mathcal T'$ is a $d'$-regular tree, $d'\leq d$. Let $w\colon \mathcal T'\to \mathbb N$ be the weight of decorations, defined as at the beginning of the proof of Lemma \ref{decorations}. Let $\alpha>1$ and let $F\colon \mathcal T'\times \mathcal T'\to \mathbb R_{\geq 0}$ be given as
\[ F(x,y)= (\alpha(d'-1))^{-\dist(x,y)} w(x).\]
By the mass transport principle, we have
\[ {\mathbb E}\left(\sum_{x\in \cT'} F(x,o')\right)={\mathbb E}\left(\sum_{x\in \cT'} F(o',x)\right)\leq \frac{d'}{d'-1}(1-\alpha^{-1})^{-1} {\mathbb E}(w(o')).\]
In particular the leftmost sum is finite a.s.. We unfold it to get
\begin{align*}\left(\sum_{x\in \mathcal T'} F(x,o')\right)&=\sum_{r=0}^\infty (\alpha(d'-1))^{-r}\sum_{x\in S_{\mathcal T'}(r)}w(x)\\ \gg &\sum_{r=0}^\infty (\alpha(d'-1))^{-r}\sum_{x\in B_{\cT'}(r)}w(x) \geq \sum_{r=0}^\infty (\alpha(d'-1))^{-r} B_{\mathcal T}(o',r).\end{align*}
We have used the fact that $|B_{\mathcal T}(o',r)|\leq \sum_{x\in B_{\mathcal T'}(o',r)} w(x)$. This crude estimate will be sufficient for regular $\mathcal T'$.  We deduce that for every vertex $o'$ in the spine of $\cT$ we have $\limsup_{r\to\infty} |B_{\cT}(o',r)|^{1/r}\leq \alpha (d-1)$ almost surely. The origin $o$ is at a finite distance from the spine so we get that $\overline{gr}(\cT)\leq  \alpha (d'-1)$. We get the regular case of theorem by letting $\alpha\to 1$.

We move to the general case. Let $(\mathcal T,o)$ be a unimodular random rooted tree  of degree at most $d$ with the spine $(\mathcal T',o')$. If $(\mathcal T',o')$ is empty almost surely then the graph $\mathcal T$ is one-ended so the theorem follows from Lemma \ref{thm:OneEnded}. From now on we assume that $(\cT',o')$ is non-empty almost surely. Let $C:=\max\{ \overline{gr}(\cT'), (d-1)^{1/2}\}$. We need to show that $\overline{gr}(\mathcal T)\leq C$. Let $\alpha>1$. We start with a simple observation. For any vertex $x\in \mathcal T'$ we have
\begin{align}\label{eq-Ballspine1} |B_{\mathcal T}(x,r)|\leq& \sum_{y\in B_{\mathcal T'}(x,r)} \min\{w(y), (d-1)^{r-\dist(x,y)}\}\\ \leq& \sum_{y\in B_{\mathcal T'}(x,r)} (w(y) (d-1)^{r-\dist(x,y)})^{1/2}.\nonumber \end{align}
Let $R$ be a positive integer. Let $H_R\colon \cT'\times \cT'\to \mathbb R_{\geq 0}$ be the mass transport function defined as
\[ H_R(x,y)=w(x)|S_{\mathcal T'}(x,\dist(x,y))|^{-1} 1_{\dist(x,y)\leq R}.\]
We compute the mass sent out from the origin $o'$:
\[\sum_{y\in \mathcal T'} H_R(o',y)=(R+1) w(o').\]
Hence, by the mass transport principle
\begin{align} {\mathbb E}\left (\sum_{y\in B_{\mathcal T'}(o',R)} w(y) |S_{\mathcal T'}(y,\dist(o',y))|^{-1} \right)=&{\mathbb E}\left (\sum_{y\in \mathcal T'} H_R(y,o')\right) \\=& (R+1)\mathbb{E}(w(o')).\nonumber\end{align} We recall that $\mathbb{E}(w(o'))<\infty$ by Lemma \ref{decorations}.
By the Borel-Cantelli lemma, the inequality
\begin{equation}\label{ineq2}
\sum_{y\in B_{\mathcal T'}(o',R)} w(y) |S_{\mathcal T'}(y,\dist(o',y))|^{-1}=\sum_{r=0}^{R} \sum_{y\in S_{\mathcal T'}(o',r)}w(y) |S_{\cT'}(y,r)|^{-1} \leq \alpha^R,
\end{equation} holds for all but finitely many $R\in \mathbb Z_{\geq 0}$ almost surely.

We would like to obtain a similar estimate for the expression $$\sum_{y\in B_{\mathcal T'}(o',R)} (d-1)^{R-\dist(o',y)}|S_{\mathcal T'}(y,\dist(o',y))|= \sum_{r=0}^{R} (d-1)^{R-r}\sum_{y\in S_{\mathcal T'}(o',r)} |S_{\mathcal T'}(y,r)|.$$
We have \[\sum_{y\in S_{\mathcal T'}(o',r)} |S_{\mathcal T'}(y,r)|\leq \sum_{k=0}^r (d-1)^{r-k}|S_{\mathcal T'}(o',2k)|,\] because every vertex $x$ with $\dist(x,y)=2k\leq 2r$, is counted with multiplicity at most $|S_{\mathcal T'}(z,r-k)|$, where $z$ is the midpoint of the geodesic segment connecting $y$ to $x$. Hence,
\begin{align*}
\sum_{r=0}^{R} (d-1)^{R-r}\sum_{y\in S_{\mathcal T'}(o',r)} |S_{\mathcal T'}(y,r)| \leq& \sum_{r=0}^{R} (d-1)^{R-r}\sum_{k=0}^r (d-1)^{r-k}|S_{\mathcal T'}(o',2k)|\\ =& \sum_{k=0}^{R} (R-k)(d-1)^{R-k}|S_{\cT'}(o',2k)|\\ \leq& R\sum_{k=0}^{R}(d-1)^{R-k}|S_{\cT'}(o',2k)|.
\end{align*}
We have $\limsup_{k\to \infty}|S_{\mathcal T'}(o',2k)|^{1/(2k)}\leq \overline{\gr}(\mathcal T')$ almost surely, so
\begin{align}
\sum_{y\in B_{\mathcal T'}(o',R)} (d-1)^{R-\dist(o',y)}|S_{\mathcal T'}(y,\dist(o',y))|\leq & R\sum_{k=0}^{R}(d-1)^{R-k}|S_{\cT'}(o',2k)|\label{ineq4}\\ \leq& \alpha^{R} C^{2R}.\nonumber
\end{align} holds for all but finitely many $R\in \mathbb Z_{\geq 0}$ almost surely.

Let $R\in \mathbb Z_{\geq 0}$ be such that both (\ref{ineq2}),(\ref{ineq4}) hold. By the Cauchy-Schwarz inequality, we get
\[\sum_{y\in B_{\mathcal T'}(o',R)} (w(y) (d-1)^{R-\dist(o',y)})^{1/2} \leq \alpha^{R}C^{R}.\]
Both  (\ref{ineq2}),(\ref{ineq4}) hold for almost all $R\in \mathbb Z_{\geq 0}$ almost surely so by combining (\ref{eq-Ballspine1}) with the last inequality we get that
\begin{align*}\limsup_{R\to \infty} |B_{\mathcal T}(o',R)|^{1/R}\leq& \limsup_{R\to\infty} \left( \sum_{y\in B_{\mathcal T'}(o',R)} (w(y) (d-1)^{R-\dist(o',y)})^{1/2}\right)^{1/R}\\ \leq& \alpha C \end{align*} holds almost surely. To prove the theorem, we let $\alpha\to 1$.
\end{proof}

\section{A large deviation principle for the lazy random walk on a regular tree.}\label{sec:LDP}

In this section we establish a large deviation principle for the lazy random walk on a regular tree. The starting result is due to Lalley, which we quote from the writeup in Woess's book \cite[19.4]{Woess}, but we need to add some additional math to be able to apply it. 

Let $\mathcal T_d$ be the $d$-regular tree. Choose a root $o\in \mathcal T_d$ and let $X_n$ be the lazy random walk starting at $o$. The probability of passing to each neighbor is $1/2d$ and we do not move with probability $1/2$. Let $x$ be a vertex of $\mathcal T_d$. We have the following estimate.
\begin{thm}\cite[19.4]{Woess}
There is an analytic function $\varphi\colon[0,1]\to\mathbb R$ such that
\[ \mathbb P[X_n=x]\sim B(\dist(o,x)/n)(1+\frac{d-2}{2}\dist(x,o))n^{-3/2}e^{n\varphi(\dist(x,o)/n)},\]
where $B\colon [0,1]\to \mathbb R$ is an analytic function, positive on $(0,1)$. Here the sign $\sim$ means that the ratio is asymptotically contained between two positive constants. These estimate holds uniformly in $[\varepsilon,1-\varepsilon]$ for any $\varepsilon>0.$
\end{thm}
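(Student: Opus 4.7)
The plan follows the classical generating-function and saddle-point approach, carried out in detail in Chapter~19 of Woess's book \cite{Woess}. Introduce the Green function
\[G(x,y\mid z):=\sum_{n\geq 0}\P[X_n=y\mid X_0=x]z^n\]
and the first-passage generating function $F(z)$ to a fixed neighbor. Because $\cT_d$ is a tree, any excursion decomposes uniquely into first-passage excursions to neighbors; conditioning on the first step of the lazy walk yields the quadratic equation
\[F(z)=\tfrac{1}{2}zF(z)+\tfrac{1}{2d}z+\tfrac{d-1}{2d}zF(z)^2,\]
so $F$ is an algebraic function with a single dominant square-root branch point at $z_0=\rho^{-1}$, where $\rho$ is the spectral radius of the lazy walk. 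For any $x$ with $\dist(o,x)=k$ one gets the factorization $G(o,x\mid z)=F(z)^k\,G(o,o\mid z)$, and $G(o,o\mid z)$ is itself a rational function of $F$ obtained by summing the geometric series of excursion amplitudes.

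The probability $\P[X_n=x]$ is then the $n$-th Taylor coefficient of $G(o,x\mid z)$, which I would extract by Cauchy's formula. Setting $k=\alpha n$ with $\alpha\in[\varepsilon,1-\varepsilon]$, the integrand takes the form $F(z)^{\alpha n}z^{-n-1}$ times a smooth prefactor, and the saddle-point equation
\[\alpha\,\frac{zF'(z)}{F(z)}=1\]
determines an analytic function $z(\alpha)\in(0,z_0)$. One then defines
\[\varphi(\alpha):=\alpha\log F(z(\alpha))-\log z(\alpha),\]
so the leading exponential $e^{n\varphi(\alpha)}$ is read off directly. The factor $n^{-1/2}$ arises from the Gaussian width at the saddle, and an additional $n^{-1}$ (producing the $n^{-3/2}$ of the statement) comes from the square-root branch point of $F$ interacting with the saddle. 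The smooth prefactor evaluated at $z(\alpha)$ becomes the analytic function $B(\alpha)$, while the polynomial correction $1+\tfrac{d-2}{2}\dist(o,x)$ emerges from a refined expansion that separates the degree-$d$ behavior at the root from the degree-$(d-1)$ behavior along the geodesic connecting $o$ to $x$.

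The principal technical obstacle is the contour-deformation step: one must move the Cauchy contour so that it passes through the saddle along the direction of steepest descent while avoiding the branch cut of $F(z)$, and then verify uniformity of the error terms on compact subsets $[\varepsilon,1-\varepsilon]$ of $(0,1)$. The endpoints are excluded precisely because the saddle collides with the branch point in the ballistic regime $\alpha\to 1$ and degenerates into the standard central-limit regime as $\alpha\to 0$. The full verification, including the explicit identification of $B$ and $\varphi$ together with the analyticity assertion, is carried out in \cite[\S19.4]{Woess}; we treat the theorem as a quoted input and do not reproduce the saddle-point computation here.
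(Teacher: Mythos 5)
The paper does not prove this statement: it is quoted directly from Woess \cite[19.4]{Woess} (ultimately Lalley's local limit theorem), so there is no internal proof to compare against, and your decision to treat it as a quoted input is exactly what the paper does. Your generating-function/saddle-point outline correctly reflects how the result is established in Woess; the only small imprecision is the bookkeeping of the $n^{-3/2}$ --- for $\dist(o,x)/n\in[\varepsilon,1-\varepsilon]$ the saddle is interior and nondegenerate, so $n^{-3/2}$ combined with the linearly growing factor $1+\frac{d-2}{2}\dist(o,x)$ is just the usual $n^{-1/2}$ of a simple saddle, and the genuine extra $n^{-1}$ from the square-root branch point only enters in the regime where $\dist(o,x)$ stays bounded.
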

We have $\varphi(0)=\log\left(\frac{1}{2}+\frac{\sqrt{d-1}}{d}\right), \varphi(1)=-\log(2d)$.
\begin{figure}
    \centering
    \includegraphics{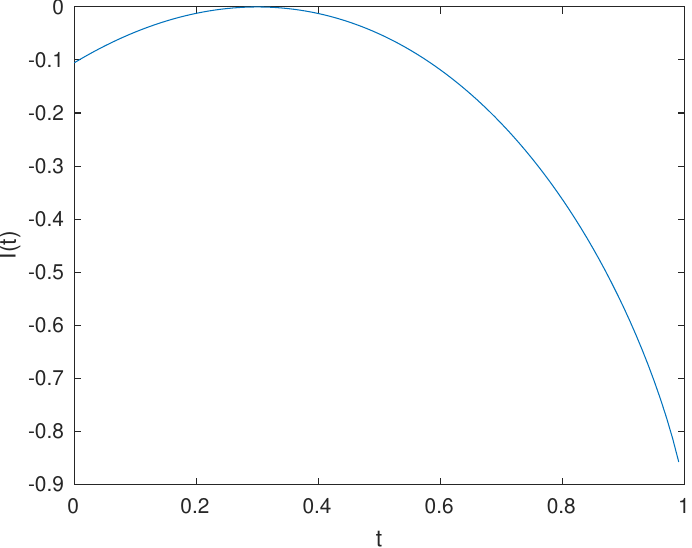}
    \caption{Rate function for $d=5$}
    \label{fig:RateF}
\end{figure}
Put $I(t)=\varphi(t)+t\log(d-1)$ for $t\in[0,1]$. Since the distribution of $X_n$ is spherically symmetric, we have $\mathbb P(d(X_n,o)=\dist(x,o))=d(d-1)^{\dist(x,o)}\mathbb P(X_n=x).$ The result of Woess implies the following large deviations estimate.
\begin{cor}
 Let $a<b\in[0,1]$. Then
\[\lim_{n\to\infty}\frac{1}{n}\log \mathbb P(an\leq \dist(o,X_n)\leq bn)=\max_{t\in [a,b]}I(t).\]
\end{cor}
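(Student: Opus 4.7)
The plan is to combine Woess's pointwise asymptotic with spherical symmetry to reduce the probability on a single spherical shell to an exponential-times-polynomial, then dominate and bound below the sum over a range of shells by a standard Laplace-type argument.

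First I would use spherical symmetry of the lazy random walk to write, for any vertex $x$ with $\dist(o,x)=k$,
\[
\mathbb P(\dist(o,X_n)=k)=|S(o,k)|\cdot \mathbb P(X_n=x)\sim d(d-1)^{k-1}\cdot B(k/n)\cdot \tfrac{d-2}{2}k\cdot n^{-3/2}e^{n\varphi(k/n)},
\]
which, setting $t=k/n$ and using $I(t)=\varphi(t)+t\log(d-1)$, is of the form $C(t)\cdot n^{-1/2}\cdot e^{nI(t)}$ with $C$ positive and continuous on $(0,1)$. By the uniform nature of the Woess asymptotic on $[\varepsilon,1-\varepsilon]$, this yields
\[
\frac{1}{n}\log \mathbb P(\dist(o,X_n)=k)\longrightarrow I(t)\qquad\text{as }n\to\infty,\ k/n\to t,
\]
uniformly for $t$ in any compact subinterval of $(0,1)$.

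Next, assuming first that $0<a<b<1$, I would write
\[
\mathbb P(an\leq \dist(o,X_n)\leq bn)=\sum_{k=\lceil an\rceil}^{\lfloor bn\rfloor}\mathbb P(\dist(o,X_n)=k).
\]
The lower bound is obtained by keeping only one summand, namely the term with $k/n$ closest to a maximizer $t^{*}\in[a,b]$ of the continuous function $I$; uniform convergence gives $\tfrac{1}{n}\log$ of this term tending to $I(t^{*})=\max_{[a,b]}I$. The upper bound is obtained by estimating the sum by $(n+1)$ times its largest summand; the prefactor $n+1$ disappears after taking $\tfrac{1}{n}\log$, and uniform convergence again supplies the exponential rate $\max_{[a,b]}I$.

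The only real obstacle is the boundary, since the Woess expansion is uniform only on $[\varepsilon,1-\varepsilon]$. I would treat this by continuity of $I$ on $[0,1]$. For the upper bound when $a=0$ or $b=1$, I would split $[a,b]$ into $[a,b]\cap[\varepsilon,1-\varepsilon]$ and its complement; the interior part is handled as above, while each endpoint shell $[0,\varepsilon n]$ or $[(1-\varepsilon)n,n]$ contributes probability at most $1$, so after choosing $\varepsilon$ small, uniform continuity of $I$ ensures the endpoint contribution does not exceed $\exp(n(\max_{[a,b]}I+o(1)))$. For the lower bound, if a maximizer lies at the boundary I would shrink the interval by $\varepsilon$, apply the interior argument, and let $\varepsilon\to 0$ using continuity of $I$. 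This pins down $\lim\tfrac{1}{n}\log\mathbb P(an\leq\dist(o,X_n)\leq bn)=\max_{[a,b]}I$.
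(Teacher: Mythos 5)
Your overall strategy (spherical symmetry to pass from $\mathbb P(X_n=x)$ to $\mathbb P(\dist(o,X_n)=k)$, then a Laplace-type bound: one term from below, $(n+1)$ times the maximal term from above) is the right one, and it is essentially all the paper intends when it says the corollary follows from Woess's estimate; your interior argument and your lower-bound treatment of the endpoints (shrink to $[a+\varepsilon,b-\varepsilon]$ and use continuity of $I$) are fine.

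However, your upper bound at the endpoints has a genuine gap. You bound the contribution of the shells $\{\dist(o,X_n)\le\varepsilon n\}$ (resp.\ $\{\dist(o,X_n)\ge(1-\varepsilon)n\}$) by $1$, i.e.\ by $e^{0}$. But the target rate $\max_{[a,b]}I$ is in general \emph{strictly negative}: $I\le 0$ on $[0,1]$ with $I=0$ only at the speed $t^{*}$ of the walk (strict concavity), so for instance $\max_{[0,b]}I<0$ whenever $b<t^{*}$ and $d\ge 3$ (note $I(0)=\log(\tfrac12+\tfrac{\sqrt{d-1}}{d})<0$). In that case the bound ``$\le 1$'' gives rate $0$, which exceeds $\max_{[a,b]}I+o(1)$, and invoking uniform continuity of $I$ afterwards does not repair this, since the bound by $1$ never involved $I$. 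To close the gap you need an actual exponential estimate on the boundary shells. Near $t=0$ one can use the operator bound $p_n(o,x)\le\rho^{n}$ with $\rho=\tfrac12+\tfrac{\sqrt{d-1}}{d}=e^{I(0)}$, giving
\[
\mathbb P(\dist(o,X_n)\le\varepsilon n)\;\le\;\sum_{k\le\varepsilon n}d(d-1)^{k-1}\rho^{n}\;\ll\;e^{\,n\left(I(0)+\varepsilon\log(d-1)\right)},
\]
which is $\le e^{\,n(\max_{[a,b]}I+o(1))}$ once $\varepsilon$ is small, by continuity of $I$. Near $t=1$ a direct count works: reaching distance $\ge(1-\varepsilon)n$ in $n$ lazy steps forces at least $(1-\varepsilon)n$ moving steps, of which at most $\varepsilon n$ are backtracks, and a binomial/Chernoff estimate yields a rate tending to $I(1)$ as $\varepsilon\to 0$. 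With these two replacements your proof is complete.
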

We will refer to $I(t)$ as the \emph{rate function} (Figure \ref{fig:RateF}).  It is analytic and non-positive on $[0,1]$.

\begin{lem}\label{lem:RateDerivative}
The function $I$ is strictly concave, and the right derivative of $I$ at $0$ satisfies $I'(0)=\frac{1}{2}\log(d-1).$
\end{lem}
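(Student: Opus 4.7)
The plan is to identify $-I$ with the classical Cram\'er large-deviations rate function for an i.i.d.\ sum and then read off both statements from standard Legendre-transform properties. Observe that the radial process $|X_n|$ is a Markov chain on $\Z_{\geq 0}$ whose increments, whenever it is not at the root, are distributed as a variable $\xi \in \{-1,0,+1\}$ taking these values with probabilities $\tfrac{1}{2d}$, $\tfrac{1}{2}$, $\tfrac{d-1}{2d}$ respectively. Put
\[
\Lambda(\lambda) := \log\E\!\left[e^{\lambda\xi}\right] = \log\!\left(\tfrac{1}{2d}e^{-\lambda} + \tfrac{1}{2} + \tfrac{d-1}{2d}e^{\lambda}\right), \qquad J := \Lambda^*.
\]

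The first step is to show $I = -J$ on $[0,1]$. Cram\'er's theorem applied to the unreflected walk $S_n := \xi_1 + \cdots + \xi_n$ yields $\lim_n \tfrac1n\log\P(S_n/n\in[a,b]) = -\inf_{[a,b]} J$ for every $[a,b] \subset (-1,1)$. For $0 < a < b < 1$ a Skorohod-type coupling shows $|X_n| \geq S_n$ pathwise, and together with the strictly positive drift $\tfrac{d-2}{2d}$ of $\xi$ (the case $d = 2$ is degenerate and handled separately) this means that boundary visits to $0$ contribute at most a subexponential factor to $\P(|X_n|/n \in [a,b])$. Combining with the Corollary above then forces $\max_{[a,b]} I = -\inf_{[a,b]} J$ on every such interval; analyticity of $I$ (by hypothesis) and of $J$ (as Legendre dual of the real-analytic strictly convex function $\Lambda$) extends the identity $I = -J$ to all of $[0,1]$.

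Granted this identification, strict concavity of $I$ is equivalent to strict convexity of $J$, which follows from $\Lambda''(\lambda) > 0$: indeed $\Lambda''(\lambda)$ equals the variance of $\xi$ under the exponentially tilted law $e^{\lambda\xi}/M(\lambda)$, which is positive because $\xi$ is non-degenerate. For the derivative at $0$, Legendre duality gives $J'(0) = \lambda^*$ where $\lambda^*$ is the unique solution of $\Lambda'(\lambda^*) = 0$; writing out $M'(\lambda^*) = 0$ reads $\tfrac{d-1}{2d}e^{\lambda^*} = \tfrac{1}{2d}e^{-\lambda^*}$, so $e^{\lambda^*} = 1/\sqrt{d-1}$ and hence $I'(0) = -J'(0) = \tfrac12\log(d-1)$.

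The main obstacle is the identification $I = -J$; once in place, the rest is a short Legendre-transform computation. The delicate point is verifying that reflection of $|X_n|$ at $0$ is invisible at the exponential scale, which boils down to showing that the number of visits to the root by time $n$ is subexponentially controlled on the event $\{|X_n|/n \in [a,b]\}$. An alternative route that avoids reflection altogether is to recognize $\varphi$ as a Legendre transform directly from the saddle-point expression for the tree's Green's function in Woess, from which both statements drop out at once.
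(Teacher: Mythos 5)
Your route is genuinely different from the paper's. The paper never touches the radial Markov chain: it takes Woess's explicit variational formula $\varphi(t)=\min_{0\le x\le r}\{t\log F(x)-\log x\}$ together with the explicit minimizer $x(t)$, gets $\varphi'(t)=\log F(x(t))$ by the envelope identity and $\varphi''(t)=x'(t)/(tx(t))<0$ by direct computation. Your closing ``alternative route'' (read $\varphi$ off as a Legendre-type transform from the saddle-point expression) is essentially what the paper does. Your main route --- identifying $-I$ with the Cram\'er rate function $\Lambda^{*}$ of the radial increment $\xi$ --- leads to a true statement (the endpoint values $I(0)=\log(\tfrac12+\tfrac{\sqrt{d-1}}{d})=-\Lambda^{*}(0)$, $I(1)=\log\tfrac{d-1}{2d}=-\Lambda^{*}(1)$ and the derivative all check out), and once that identification is granted, the computation of $I'(0)$ and the strict concavity via $\Lambda''>0$ are correct (modulo the standard point that strict convexity of $\Lambda^{*}$ on $(-1,1)$ really comes from $J'=(\Lambda')^{-1}$ being strictly increasing, which holds here since $\Lambda$ is smooth with $\Lambda''>0$).

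The gap is that the identification $I=-\Lambda^{*}$ is asserted rather than proved, and the tool you name does not deliver it. The coupling $|X_n|\ge S_n$ gives only $\mathbb{P}(|X_n|\le bn)\le \mathbb{P}(S_n\le bn)$ and $\mathbb{P}(|X_n|\ge an)\ge \mathbb{P}(S_n\ge an)$, i.e.\ one inequality in each tail, which is not enough for a two-sided LDP on $[a,b]$. For the missing upper bound on $\mathbb{P}(|X_n|\ge an)$ with $a$ above the drift $\tfrac{d-2}{2d}$, the reflection pushes the wrong way: writing $|X_n|=S_n+2R_n$ with $R_n$ the number of down-attempts at the root, the crude bound $\mathbb{P}(R_n\ge \epsilon n)\le q^{\epsilon n}$ ($q$ = return probability) does not beat $e^{-n\Lambda^{*}(a-2\epsilon)}$ uniformly as $\epsilon\to 0$, so a union bound fails and one needs something like a decomposition at the last root visit. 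For the missing lower bound on $\mathbb{P}(|X_n|\in[an,bn])$ when $[a,b]$ lies below the drift, you need that conditioning the free walk to stay nonnegative costs only a subexponential factor on $\{S_n\approx tn\}$, $t>0$; this is standard (tilt to drift $t$) but it is a separate argument, not a consequence of the coupling. So: right answer, viable strategy, but the central comparison step needs an actual proof --- or replace it by your own final suggestion, which is the paper's argument and avoids the reflection issue entirely.
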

\begin{proof}
For $d\in \N$, set
\[r=\left(\frac{1}{2}+\frac{\sqrt{d-1}}{d}\right)^{-1},\,\, s=\left(\frac{1}{2}-\frac{\sqrt{d-1}}{d}\right)^{-1}\]
and define
\[F\colon [0,r]\to \R\]
by
\[F(x)=\frac{d}{(d-1)x}\left(\left(1-\frac{1}{2}x\right)-\sqrt{(1-x/r)(1-x/s)}\right).\]
The function $\phi\colon [0,1]\to \R$ is given by \cite[19.3]{Woess}
\[\phi(t)=\min\{t\log(F(x))-\log(x):0\leq x\leq r\}.\]
Set $\psi\colon [0,1]\to \R$,$x\colon [0,1]\to R$ by
\[\psi(t)=\sqrt{d^{2}t^{2}+4(d-1)(1-t^{2})},\]
\[x(t)=\frac{2d}{(d-2)^{2}}(d-\psi(t)).\]
Then by \cite[19.3]{Woess}, we have that the minimum defining $\phi$ is achieved at $x(t)$. Note that for $t\in (0,1)$, we have that $x(t)\in (0,r)$, moreover $x(0)=r,x(1)=0$. Since $x(t)\in (0,r)$ for all $t\in (0,1)$ we have that
\[0=\frac{d}{dx}(t\log F(x)-\log(x))\big|_{x=x(t)}=t\frac{F'(x(t))}{F(x(t))}-\frac{1}{x(t)} \textnormal{ for all $t\in (0,1)$}.\]
Thus
\begin{equation}\label{log diff simplification}
\phi'(t)=\log(F(x(t)))+t\frac{F'(x(t))}{F(x(t))}x'(t)-\frac{x'(t)}{x(t)}=\log F(x(t)) \textnormal{ for all $t\in (0,1)$}.
\end{equation}
Thus the right derivative of $\phi$ at $0$ is given by:
\begin{align*}\lim_{t\to 0}\phi'(t)=&\log(F(x(0)))=\log(F(r))=\log\left(\frac{d}{(d-1)r}\left(1-\frac{1}{2}r\right)\right)\\ =&\log\left(\frac{d}{d-1}\left(\frac{1}{r}-\frac{1}{2}\right)\right)=\log\left(\frac{1}{\sqrt{d-1}}\right).\end{align*}
Hence, the right derivative of $\phi$ at $0$ is $-\frac{1}{2}\log(d-1)$. We proceed to compute the second derivative of $\phi$. Using (\ref{log diff simplification}), we have:
\[\phi''(t)=\frac{F'(x(t))}{F(x(t))}x'(t)=\frac{x'(t)}{tx(t)}=-\frac{2d}{(d-2)^{2}}\frac{\psi'(t)}{tx(t)} \textnormal{ for all $t\in (0,1)$.}\]
Since $x(t)>0$ for all $t\in (0,1)$, and $\psi'(t)>0$ for all $t\in (0,1)$ this shows that $\phi$ is strictly concave.

\end{proof}

\section{Growth of unimodular random rooted trees}\label{sec:GrowthInTrees}

In this section we prove our main Theorem \ref{thm:SGrowth}, using all the results before. Since we already have working notation for the upper and lower growth, we restate Theorem \ref{thm:SGrowth} using those. 

\begin{thm}\label{thm:URTGrowth} Let $(\mathcal T,o)$ be a unimodular random rooted tree  of degree at most $d$ with $\overline{\gr}(\mathcal T)>\sqrt{d-1}$. Then $\underline{\gr}(\mathcal T)=\overline{\gr}(\mathcal T)$.
\end{thm}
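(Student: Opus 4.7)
The plan is to use the machinery of Sections \ref{sec:Prelim}--\ref{sec:LDP}. By Benjamini--Lyons--Schramm, I realize $(\cT, o)$ as the cluster $\cE(o)$ of the root in an invariant edge percolation $\cE$ of $\cT_d$, so that Theorem \ref{thm:Exponent} furnishes the limit $\rho_\cE = \lim_n \P[X_{2n}\in \cE(o)]^{1/2n}$. By Theorem \ref{thm:HFGrowth spine} I may replace $\cT$ by its spine, so that sphere sizes $s_r := |S_\cT(o,r)|$ are non-decreasing in $r$. Writing $\gamma_r := r^{-1}\log s_r$, one has $\overline\beta := \overline{\gr}(\cT) = e^{\limsup\gamma_r}$ and $\underline\beta := \underline{\gr}(\cT) = e^{\liminf\gamma_r}$, with $\overline\beta > \sqrt{d-1}$ by hypothesis. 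The target is a Cohen--Grigorchuk-type relation determining the growth from $\rho_\cE$.

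Decomposing by distance from $o$ and applying the LDP of Section \ref{sec:LDP},
\[
 \P[X_{2n}\in \cE(o)] = \sum_{r=0}^{2n} s_r\, p_{2n}(o, x_r) \asymp \sum_{r=0}^{2n} s_r\, e^{2n\varphi(r/2n)}
\]
up to polynomial factors, for $x_r$ any vertex at distance $r$. Since such a positive sum is comparable to its maximum on the exponential scale, Theorem \ref{thm:Exponent} yields
\[
\log\rho_\cE = \lim_{n\to\infty}\max_{0\le r\le 2n}\Bigl[\tfrac{r}{2n}\gamma_r + \varphi(r/2n)\Bigr].
\]
A one-term lower bound, plugging $r=k_j$ from a subsequence with $\gamma_{k_j}\to\log\overline\beta$ and $k_j/(2n_j)\to t$ for an arbitrary $t\in(0,1)$, then gives $\log\rho_\cE\ge t\log\overline\beta+\varphi(t)$, whence
\[
\log\rho_\cE \ge G(\overline\beta) := \max_{t\in[0,1]}\bigl[t\log\overline\beta+\varphi(t)\bigr].
\]

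The critical step is to pin the argmax $t^*(n) := r^*(n)/(2n)$ down to a single limit. Along any subsequence where $t^*(n)\to t^\dagger$ and $\gamma_{r^*(n)}\to\gamma^\dagger$ (possible since both are bounded), the identity above gives
\[
\log\rho_\cE = t^\dagger\gamma^\dagger + \varphi(t^\dagger) \le t^\dagger\log\overline\beta+\varphi(t^\dagger)\le G(\overline\beta),
\]
using $\gamma^\dagger\le\log\overline\beta$ when $r^*(n_j)\to\infty$. Combined with the lower bound, equality propagates, forcing $\gamma^\dagger = \log\overline\beta$ and $t^\dagger$ equal to the (unique, by strict concavity of $\varphi$ from Lemma \ref{lem:RateDerivative}) maximizer of $t\log\overline\beta+\varphi(t)$. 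The threshold $\overline\beta>\sqrt{d-1}$ together with $\varphi'(0)=-\tfrac12\log(d-1)$ gives $\log\overline\beta+\varphi'(0)>0$, ruling out $t^\dagger = 0$ and forcing $t^*(\overline\beta)\in(0,1]$. Hence $t^*(n)\to t^*(\overline\beta)>0$ and $\gamma_{r^*(n)}\to\log\overline\beta$ along the original sequence.

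Once the argmax is pinned, monotonicity of $s_r$ propagates convergence to every $r$. For any $\epsilon>0$ and every sufficiently large $r$, taking $n_+ := \lceil r/(2(t^*(\overline\beta)-\epsilon))\rceil$ and $n_- := \lfloor r/(2(t^*(\overline\beta)+\epsilon))\rfloor$ and using $t^*(n)\to t^*(\overline\beta)$ gives $r^*(n_-)\le r\le r^*(n_+)$ with both ratios $r^*(n_\pm)/r$ inside $[1-O(\epsilon),\,1+O(\epsilon)]$. Monotonicity then yields $s_{r^*(n_-)}\le s_r\le s_{r^*(n_+)}$, and taking $1/r$ powers with $\gamma_{r^*(n_\pm)}\to\log\overline\beta$ sandwiches $s_r^{1/r}$ between two quantities tending to $\overline\beta$ as $r\to\infty$ and $\epsilon\to 0$. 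Thus $\lim_r s_r^{1/r} = \overline\beta$, so $\underline\beta = \overline\beta$. The main obstacle is the pinning step: both strict concavity of $\varphi$ and the threshold $\overline\beta>\sqrt{d-1}$ are essential, as without the threshold the argmax drops to $t=0$ and $\rho_\cE$ carries no information about $\overline\beta$, while without strict concavity the argmax could oscillate between distinct ``growth strata'' along different subsequences.
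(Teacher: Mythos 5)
Your proposal is correct, and it follows the paper's skeleton up to the variational identity: pass to the spine via Theorem \ref{thm:spineGrowth}, realize the tree as a cluster of an invariant percolation, invoke Theorem \ref{ThmA} for the existence of $\rho^\cE$, and convert the return probability into $\log\rho^\cE=\lim_n\max_r\bigl[\tfrac{r}{2n}\gamma_r+\varphi(r/2n)\bigr]$, which is exactly the identity (\ref{eqn:MaxIdentity2}) after the change of normalization $\log s_r=\log a_r+r\log(d-1)+O(1)$, $I(t)=\varphi(t)+t\log(d-1)$. Where you genuinely diverge is the concluding deterministic step. The paper abstracts it into Lemma \ref{lem:ConvexConvergence} (via the cone Lemma \ref{lem:LipschCone}), whose input is the two-sided Lipschitz bound (\ref{eqn:Lipsch}) on $\log a_n$ (i.e.\ both $s_{n+1}\geq s_n$ from leaflessness and $s_{n+1}\leq (d-1)s_n$ from the degree bound), and which forces the accumulation cone to touch the shifted concave curve in a single point. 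You instead pin the argmax $t^*(n)$ directly: the subsequential-limit computation, together with strict concavity (Lemma \ref{lem:RateDerivative}, transferred from $I$ to $\varphi$) and the threshold $\log\overline\beta+\varphi'(0)>0$ ruling out $t^\dagger=0$ (including the case of bounded $r^*(n)$, where the max tends to $\varphi(0)<G(\overline\beta)$), shows $t^*(n)\to t^*(\overline\beta)>0$ and $\gamma_{r^*(n)}\to\log\overline\beta$; then the monotone sandwich $s_{r^*(n_-)}\leq s_r\leq s_{r^*(n_+)}$ uses only $s_{r+1}\geq s_r$, not the full Lipschitz condition, and yields $\liminf_r\gamma_r\geq\log\overline\beta$. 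This buys a more self-contained and slightly leaner argument (only one-sided regularity of the sphere sizes is needed, no cone machinery), and it identifies $\log\rho^\cE=\max_t[t\log\overline\beta+\varphi(t)]$ along the way, which is essentially Corollary \ref{cor:cogrowth}; what the paper's route buys is an abstract lemma about arbitrary sequences with bounded increments, reusable beyond the monotone setting. Two small points you share with the paper and should at most flag: the Woess asymptotics are uniform only on $[\varepsilon,1-\varepsilon]$, so the max identity needs a word about $r/2n$ near $0$ and $1$, and the $r=0$ term should be read as $\log s_0/2n=0$ rather than through $\gamma_0$.
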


\begin{proof}
Let $(\mathcal T,o)$ be a unimodular random rooted tree  of degree at most $d$ such that $\overline{\gr}(\mathcal T)>\sqrt{d-1}.$ Let $(\mathcal T',o')$ be the spine of $(\mathcal T,o)$. By Theorem \ref{thm:spineGrowth} we have $\overline{\gr}(\mathcal T)=\overline{\gr}(\mathcal T')$ and the inequality $\underline{\gr}(\mathcal T')\leq \underline{\gr}(\mathcal T)$ is clear. Therefore, it is enough to prove existence of the growth for the spine. We can assume from now on that $(\mathcal T,o)$ has no leaves. We will use the following result of Benjamini, Lyons and Schramm:

\begin{thm}[{\cite[4.2]{BLS}}]\label{lem:Subtree}
There exists an invariant edge percolation $\cE$ on the $d$-regular tree $\mathcal T_d$ such that the law of $\mathcal T$ is given by the connected component of the root.
\end{thm}
Not that if $\mathcal T$ is an ergodic unimodular random rooted graph then the percolation $\cE$ has indistinguishable clusters, by definition. Theorem \ref{thm:URTGrowth} will be deduced from the existence of the existence of the exponent $\rho_\cE,$ measuring the decay of the return probability to clusters of $\cE$. We will need several preliminary lemmas.
\begin{lem}\label{lem:LipschCone}
Let $(\alpha_n)_{n\in\mathbb N}$ be a sequence of real numbers such that $|\alpha_{n+1}-\alpha_n|\leq C$ for some constant $C$. Define the set \[\mathcal C:=\bigcap_{m\to\infty}\overline{\bigcup_{n\geq m}\{(r/n,\alpha_r/n)|r\in\mathbb N\}}.\] Then $\mathcal C$ is the closed cone given by
\[\mathcal C=\{(x,y)| x\in [0,\infty), x\liminf_{n\to\infty}\frac{\alpha_n}{n}\leq y\leq x\limsup_{n\to\infty}\frac{\alpha_n}{n}\}.\]
In particular, the limit $\lim_{n\to\infty}\frac{\alpha_{n}}{n}$ exists if and only if $\mathcal C$ is a half line.
\end{lem}
\begin{proof}
It is clear that $\mathcal C$ is contained in the cone \[\{(x,y)| x\in [0,\infty), x\liminf_{n\to\infty}\frac{\alpha_n}{n}\leq y\leq x\limsup_{n\to\infty}\frac{\alpha_n}{n}\}.\] We sketch the proof of the reverse inclusion. Let $\alpha_-:=\liminf_{n\to\infty}\frac{\alpha_n}{n}, \alpha_+:=\limsup_{n\to\infty}\frac{\alpha_n}{n}$. Let $x\in [0,+\infty)$ and
$y\in [\alpha_-x,\alpha_+x].$ For the sake of contradiction, suppose that $(x,y)\not\in \mathcal C.$ Since $\mathcal C$ is closed, there is an $\varepsilon>0$ such that $B_{\mathbb R^2}((x,y),\varepsilon)\cap \mathcal C=\emptyset$. By definition of $\mathcal C$, this means that there is an $m_0\in\mathbb N$ such that $(r,\alpha_r)\not\in \bigcup_{n\geq m_0}n B_{\mathbb R^2}((x,y),\varepsilon)$ for all $r\in\mathbb N$. Write $U:=\bigcup_{n\geq m_0}n B_{\mathbb R^2}((x,y),\varepsilon).$ As it can be seen on Figure \ref{fig:Cone}, for large $r$ the set $U$ becomes too thick for the sequence $(r,\alpha_r)$ to cross from one side to the other. This is a contradiction, because the sequence must approach the lines $y=\alpha_- x$ and $y=\alpha_+x$ infinitely often.
\begin{figure}
    \centering
    \includegraphics[trim=0 200 0 200,clip,scale=0.5]{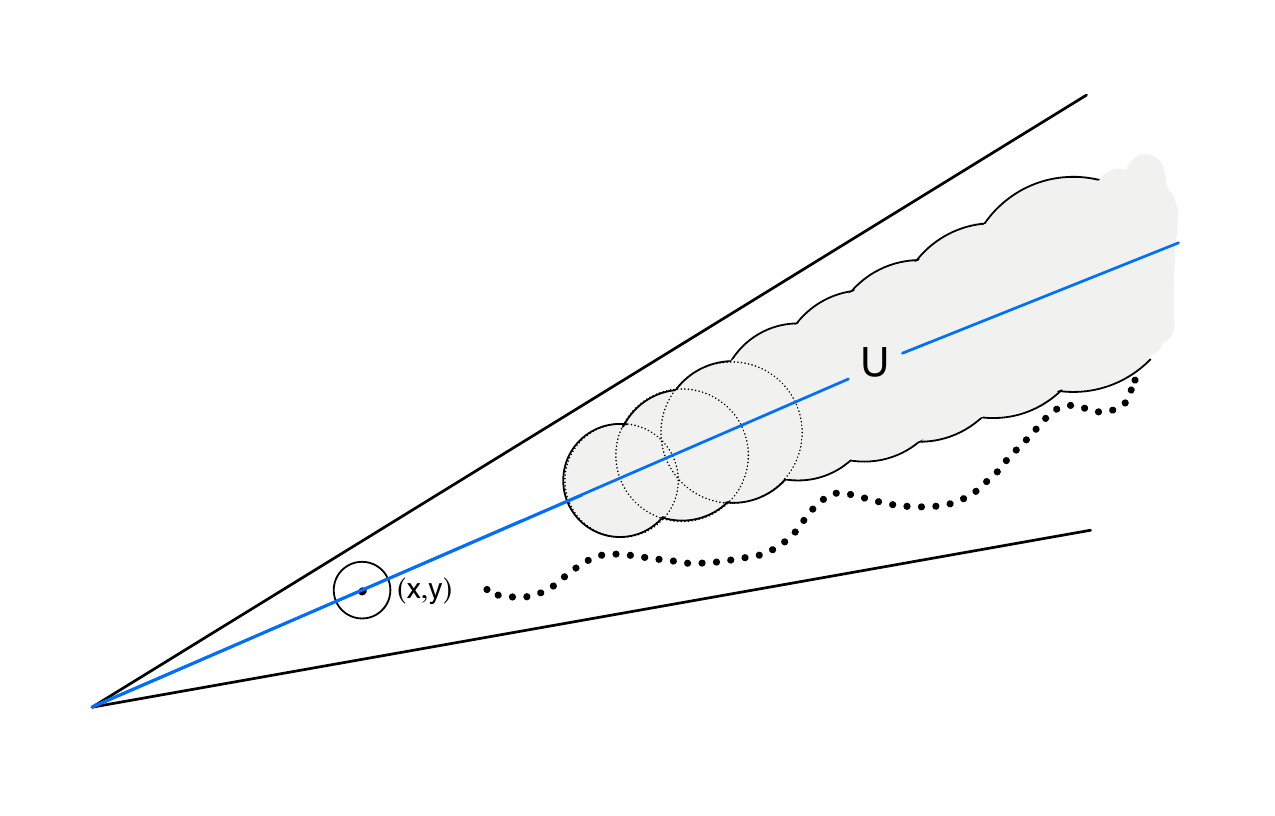}
    \caption{Sequence $(r,\alpha_r)$ and the set $U$.}
    \label{fig:Cone}
\end{figure}
\end{proof}

\begin{lem}\label{lem:ConvexConvergence}Let $(\alpha_n)_{n\in\mathbb N}$ be a sequence of real numbers and let $F\colon [0,1]\to\mathbb R$ be a strictly concave continuous function, right differentiable at $0$. Suppose that
\begin{enumerate}
    \item $\liminf_{n\to\infty}\frac{\alpha_n}{n}< F'(0),$ \label{item:derivative inequality for cone}
    \item $|\alpha_{n+1}-\alpha_n|\leq C$ for some constant $C$,
    \item the limit $\ell:=\lim_{n\to\infty} \max_{r=0,\ldots, n}\left(F\left(\frac{r}{n}\right)-\frac{\alpha_r}{n}\right)$ exists.
\end{enumerate}
Then, the limit $\lim_{n\to\infty} \frac{\alpha_n}{n}$ exists.
\end{lem}
\begin{proof}
Let \[\mathcal C=\bigcap_{m\to\infty}\overline{\bigcup_{n\geq m}\{(r/n,\alpha_r/n)|r\in\mathbb N\}}.\] Let $\alpha_-:=\liminf_{r\to\infty}\frac{\alpha_r}{r}, \alpha_+:=\limsup_{r\to\infty}\frac{\alpha_r}{r}.$ By Lemma \ref{lem:LipschCone}, $\mathcal{C}$ is the closed cone bounded by the half lines $y=\alpha_-x, x\geq 0$ and  $y=\alpha_+x,x\geq 0$.
Let $D:=\{(x,y)| x\in[0,1], y\leq F(x)-\ell\}$ and put $D_-:=\{(x,y)| x\in[0,1], y< F(x)-\ell\}$. Both are convex and $D$ is closed.
Let $t\in[0,1]$. The third condition of the lemma implies that $\mathcal C\cap D_-=\emptyset.$ It also implies that there exists a sequence  $(n_m)_{m\in\mathbb N}$, $r_m\in\{0,\ldots, n_m\}$ such that $\lim_{m\to\infty}\left(F\left(\frac{r_m}{n_m}\right)-\frac{\alpha_{r_m}}{n_m}\right)=\ell$. Passing to a sub-sequence we can assume that $\lim_{m\to\infty}(\frac{r_m}{n_m}, \frac{\alpha_{r_m}}{n_m})=(x_0,y_0).$ Condition (\ref{item:derivative inequality for cone}) and the fact that $\mathcal{C}\cap D_{-}=\varnothing,$ guarantee that $(x_0,y_0)\ne (0,0).$ By construction, $(x_0,y_0)\in \mathcal C\cap D$ so we deduce that $\mathcal C\cap D\neq \emptyset.$
\begin{figure}
\begin{subfigure}{0.5\textwidth}
    \centering
    \includegraphics[trim=0 200 0 200,clip,width=.9\linewidth]{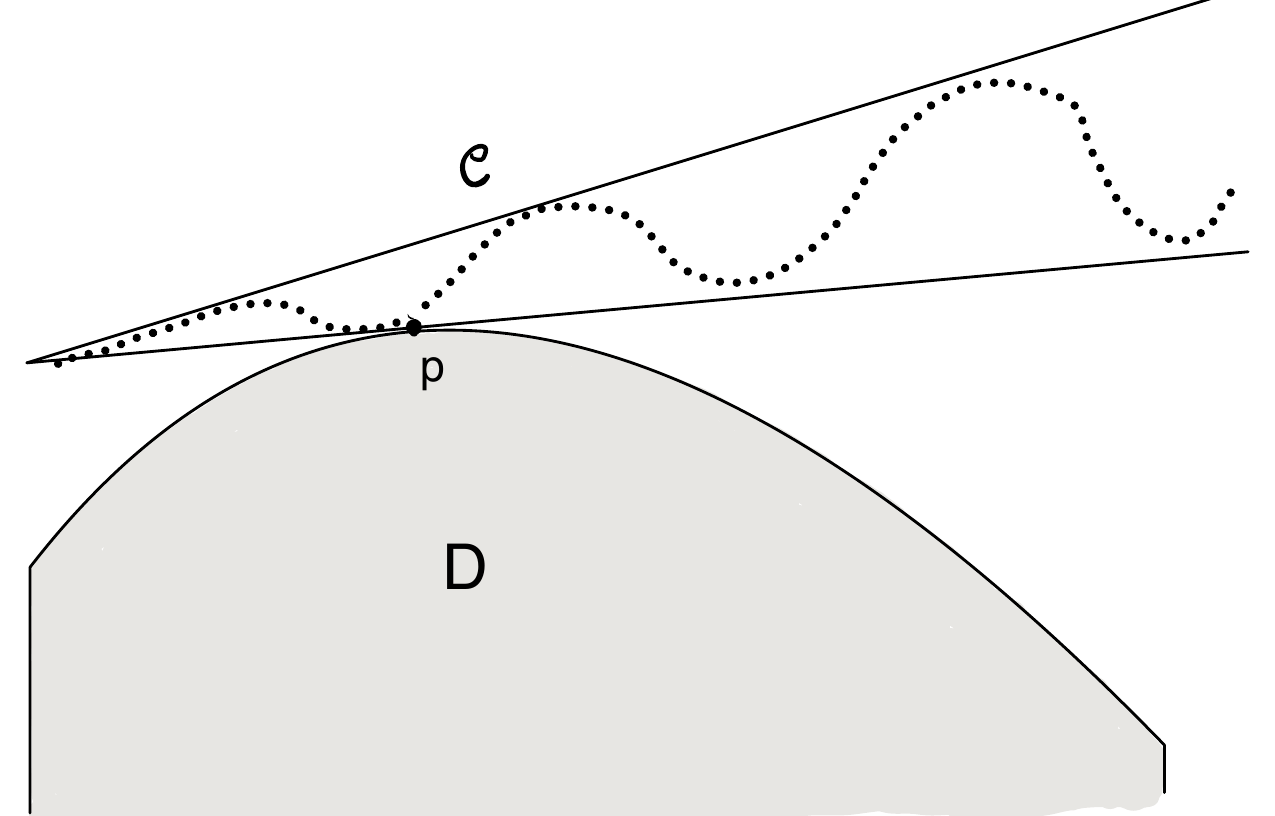}
    \caption{Sets $D$ and $\mathcal C$}
    \label{fig:DandC}
\end{subfigure}%
\begin{subfigure}{0.5\textwidth}
    \centering
    \includegraphics[trim=0 180 0 180,clip,width=.9\linewidth]{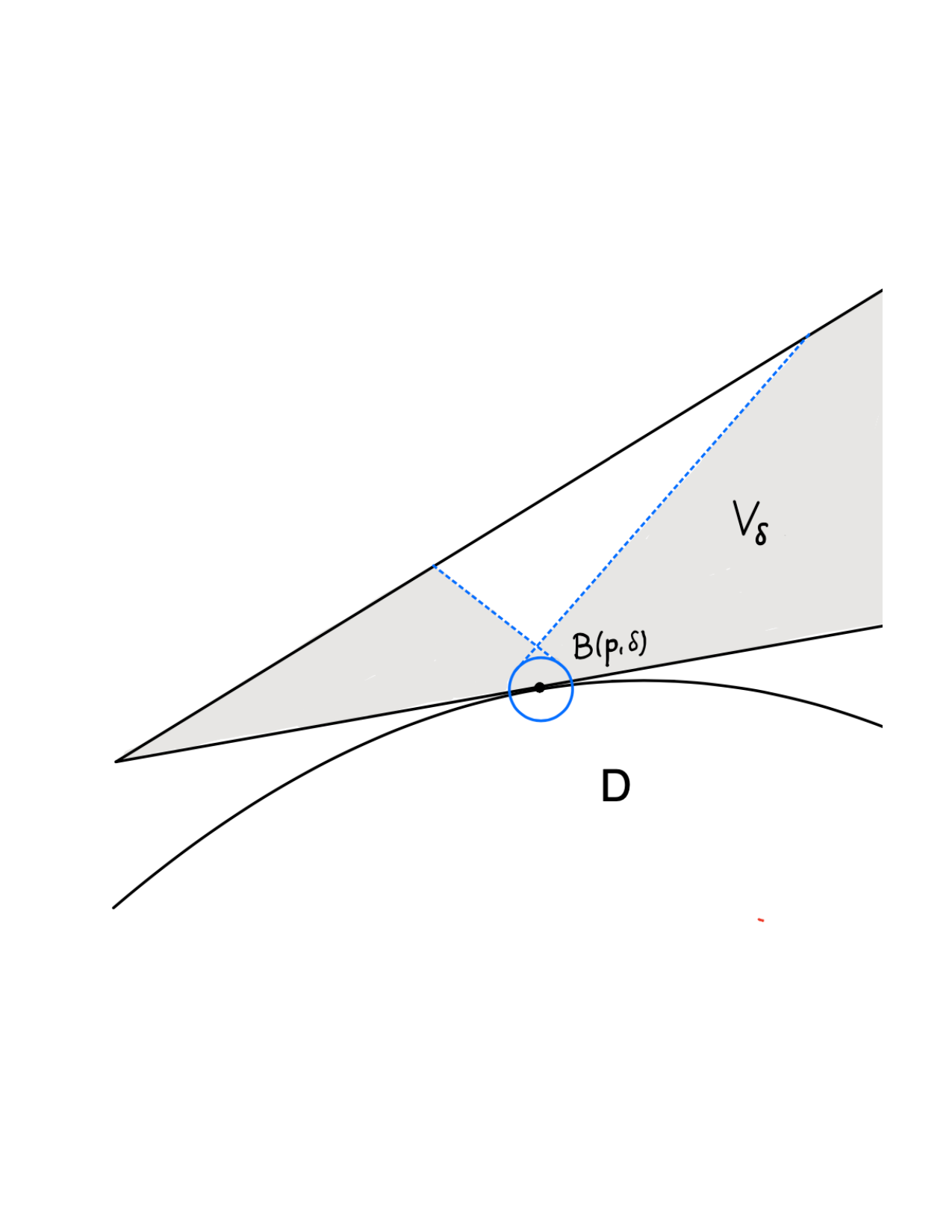}
    \caption{Set $V_\delta$}
    \label{fig:vdelta}
\end{subfigure}
\label{fig:cones}
\caption{ }
\end{figure}
By the strict concavity of $F(t)$, the intersection $\mathcal C\cap D$ is a single point $p=(x_0,y_0)$ lying on the line $y=\alpha_- x$ (see Figure \ref{fig:DandC}).

Let $\delta,\varepsilon>0$. Define $W_\varepsilon=\{(x,y)\in\mathcal C| y-F(x)+\ell\leq \varepsilon\}.$ By the strict concavity of $F(t)$ we can choose $\varepsilon$ small enough so that $W_\varepsilon\subset B_{\mathbb R^2}(p,\delta).$ Note that for every point $(x,y)\in \mathcal C\setminus W_\varepsilon$ we have $F(x)-y\leq \ell-\varepsilon$. Hence, by the condition (3), there exists an $n_0\in\mathbb N$ such that for all $n>n_0$ there is an $r_n\in\mathbb N$ such that $(\frac{r_n}{n},\frac{\alpha_{r_n}}{n})\in W_\varepsilon\subset B_{\mathbb R^2}(p,\delta)$. Let $$V_\delta:=\bigcup_{(x_1,y_1)\in B_{\mathbb R^2}(p,\delta)}\{(x,y)\in\mathcal C| |y-y_1|\leq C|x-x_1|\}$$ (see Figure \ref{fig:vdelta}). By the condition (2), all the points $(\frac{r}{n},\frac{\alpha_r}{n})$, $n\geq n_0$ are contained in $V_\delta$. Let $$U_\delta:=\bigcap_{n\geq n_0}nV_\delta,$$
(see Figure \ref{fig:vdeltaint}). From the preceding discussion we know that $(r,\alpha_r)\in U_\delta$ for every $r\in\mathbb N$.
\begin{figure}
    \centering
    \includegraphics[width=0.6\linewidth]{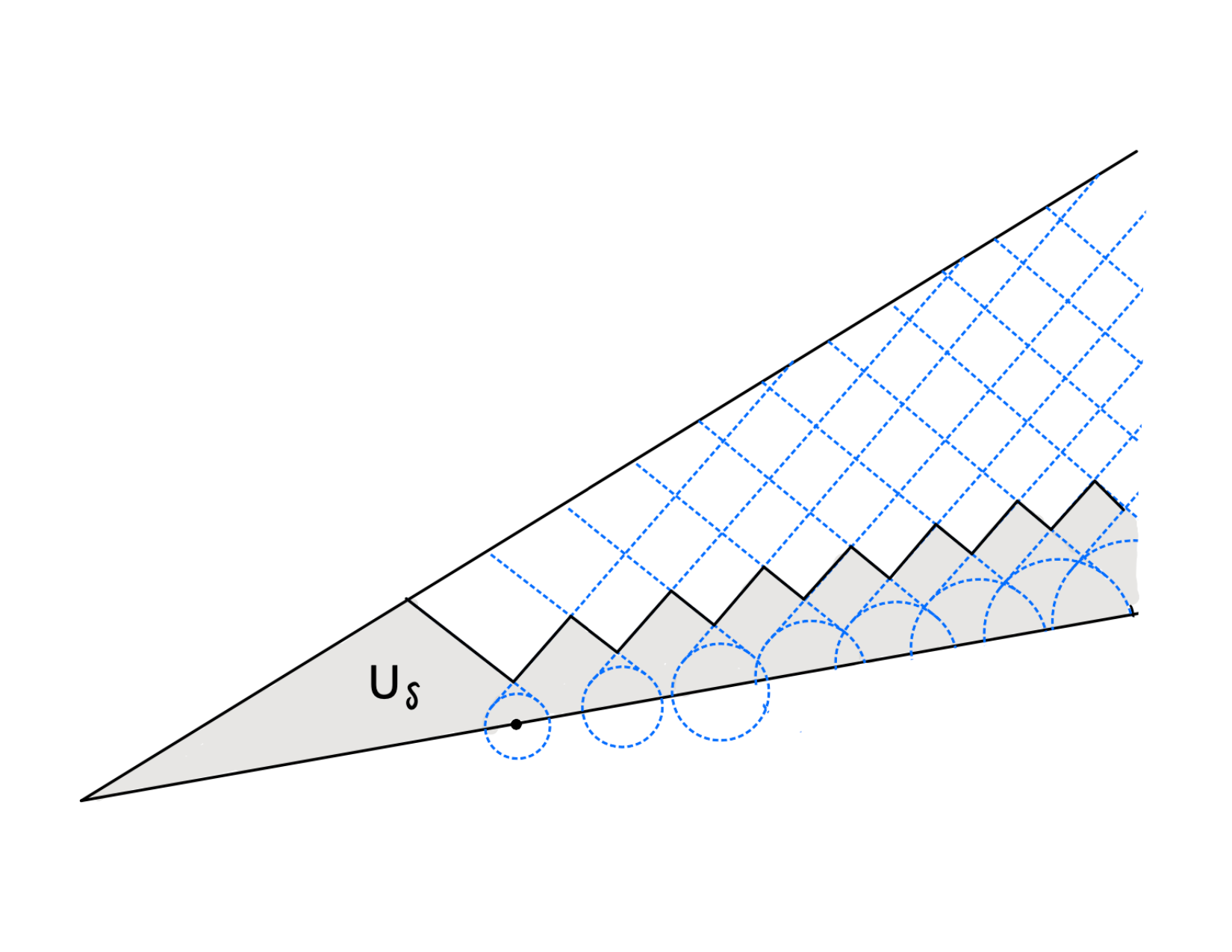}
    \caption{Set $U_\delta$}
    \label{fig:vdeltaint}
\end{figure}
Simple computation shows that this forces an inequality $\limsup_{r\to\infty}\frac{\alpha_r}{r}\leq \alpha_-+O(C\delta).$
Taking $\varepsilon,\delta\to 0$ we get $\limsup_{r\to\infty}\frac{\alpha_r}{r}=\liminf_{r\to\infty}\frac{\alpha_r}{r}.$
\end{proof}

Let $X_n$ be the lazy random walk on $\cT_d$. Using notation from Section \ref{sec:Unim23} and Theorem \ref{lem:Subtree} we have
\[p_{n}^\cE(o)=\mathbb P(X_n\in \cT).\]
Let $a_n:=\frac{|S_{\mathcal T}(o,n)|}{d(d-1)^{n-1}}$ for $n\geq 1$ and $a_0:=1$. Using the spherical symmetry of $X_n$ we can rewrite the last identity as
\[p_{n}^\cE(o)=\sum_{r=0}^n \mathbb P(\dist(X_n,o)=r)a_r.\]
By Theorem \ref{ThmA}
\[ \lim_{n\to\infty}\left(\sum_{r=0}^n \mathbb P(\dist(X_n,o)=r)a_r\right)^{1/n}=\rho^\cE.\]
The number of terms in the sum is sub-exponential in $n$, so we get
\begin{align} \log\rho^\cE=&\lim_{n\to\infty}\max_{r=0,\ldots,n} \left(\frac{1}{n}\log \mathbb P(\dist(X_n,o)=r)+\frac{1}{n}\log a_r\right)\\
=&\lim_{n\to\infty}\max_{r=0,\ldots,n} \left(I\left(\frac{r}{n}\right)+\frac{\log a_r}{n}\right).\label{eqn:MaxIdentity2}\end{align}

To prove that the growth $\gr(\cT)$ exists, we need to show that $\lim_{n\to\infty}\frac{1}{n}\log a_n$ exists. We note that the sequence $a_n$ satisfies the inequalities:
\begin{equation}\label{eqn:Lipsch} a_{n+1}\leq a_n \text{ for }n\geq 1 \text{ and } a_{n+1}\geq \frac{a_{n}}{d-1}.\end{equation}
The second inequality is in fact the only way we use the fact that $\mathcal T$ has no leaves.

Let $\alpha_n:=-\log a_n$. We would like to use Lemma \ref{lem:ConvexConvergence} to show that $\lim_{n\to\infty}\frac{-\log a_n}{n}$ exists.
By Lemma \ref{lem:RateDerivative}, $I'(0)=\frac{1}{2}\log(d-1).$ The assumption $\overline{\gr}(\mathcal T)>\sqrt{d-1}$ yields
$$\liminf_{n\to\infty}\frac{\alpha_n}{n}< \frac{1}{2}\log(d-1),$$ so the condition (1) of Lemma \ref{lem:ConvexConvergence} is satisfied.
The estimate (\ref{eqn:Lipsch}) and the identity (\ref{eqn:MaxIdentity2}) imply that the sequence $(\alpha_n)_{n\in\mathbb N}$ satisfies also the conditions (2),(3) of Lemma \ref{lem:ConvexConvergence}. Upon applying the lemma we find that $\lim_{n\to\infty}\frac{\alpha_n}{n}$ exists. The existence of the growth $\gr(\mathcal T)$ follows. \end{proof}
As a corollary, we obtain an analogue of the celebrated Cohen-Grigorchuk co-growth formula \cite{Cohen,Grig} for unimodular random rooted trees.
\begin{cor}\label{cor:cogrowth}
Let $\cT,\cE$ be as in Theorem \ref{lem:Subtree}. Let $\gamma:=\gr(\cT)$. Then \[\rho^{\cE}=\begin{cases}\frac{\sqrt{d-1}}{d}\left(\frac{e^{\gamma}}{\sqrt{d-1}}+\frac{\sqrt{d-1}}{e^{\gamma}}\right) & \text{ if } \gamma\geq \frac{1}{2}\log(d-1)\\ \frac{2\sqrt{d-1}}{d} & \text{ otherwise.}\end{cases}\].
\end{cor}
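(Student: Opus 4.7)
The plan is to evaluate the limit of maxima in (\ref{eqn:MaxIdentity2}) using the now-proven existence of $\gr(\cT)$, reducing the corollary to a one-variable calculus problem for the rate function. Writing $\mu := \gr(\cT)$ (so that $e^\gamma = \mu$ under the natural reading of the corollary), existence of growth gives $(\log a_r)/r \to \log\mu - \log(d-1)$ for $a_r = |S_\cT(o,r)|/(d(d-1)^{r-1})$. Using the uniform large-deviation asymptotics of Section~\ref{sec:LDP} on compact subsets of $(0,1)$ together with the Lipschitz-type bound (\ref{eqn:Lipsch}), the discrete maximum in (\ref{eqn:MaxIdentity2}) passes to a continuous supremum:
\[
\log \rho^\cE \;=\; \sup_{t\in[0,1]}\bigl[I(t) + t(\log\mu - \log(d-1))\bigr] \;=\; \sup_{t\in[0,1]}\bigl[\varphi(t) + t\log\mu\bigr],
\]
using $I(t) = \varphi(t) + t\log(d-1)$.

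The second step is a concavity analysis. By Lemma~\ref{lem:RateDerivative}, $\varphi$ is strictly concave with $\varphi'(0) = -\tfrac12\log(d-1)$, so the right derivative of $\varphi(t) + t\log\mu$ at $0$ equals $\log\mu - \tfrac12\log(d-1)$. When $\mu \le \sqrt{d-1}$ this derivative is non-positive and the supremum is attained at $t = 0$, yielding the constant case of the formula upon exponentiating $\varphi(0)$. When $\mu > \sqrt{d-1}$, the supremum is attained at a unique interior $t^{\ast} \in (0,1)$ with $\varphi'(t^{\ast}) = -\log\mu$.

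For the supercritical case I would use the Legendre-type representation implicit in the proof of Lemma~\ref{lem:RateDerivative}: $\varphi(t) = t\log F(x(t)) - \log x(t)$ with $\varphi'(t) = \log F(x(t))$. The critical-point condition becomes $F(x(t^{\ast})) = 1/\mu$, and direct substitution collapses
\[
\varphi(t^{\ast}) + t^{\ast} \log\mu \;=\; t^{\ast} \log\bigl(\mu F(x(t^{\ast}))\bigr) - \log x(t^{\ast}) \;=\; -\log x(t^{\ast}),
\]
so $\rho^\cE = 1/x(t^{\ast})$. Solving $F(x) = 1/\mu$ reduces, after isolating the square root in the definition of $F$ and squaring, to a linear equation in $x$ (the $x^2$ terms cancel thanks to the identity $1/(rs) = (d-2)^2/(4d^2)$ for the endpoints of Lemma~\ref{lem:RateDerivative}), and the explicit solution reassembles into the rational function in the corollary.

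The main obstacle I anticipate is the algebra in the last step: the factorization of $A^2 - (d-2)^2/(4d^2)$ with $A = \tfrac12 + (d-1)/(d\mu)$ has to come out cleanly as $(\mu+d-1)(d-1)(\mu+1)/(d^2\mu^2)$, and one must verify that the resulting $x(t^{\ast})$ lies in $(0, r)$ so that the critical point is genuinely interior to the interval of minimization. Exchanging the discrete maximum with the limit in the first step is, by comparison, routine given uniform continuity of $\varphi$ on $[0,1]$ and the convergence of $(\log a_r)/r$.
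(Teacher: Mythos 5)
Your plan is correct and, unlike the paper, actually carries out the computation. The paper's own proof is a two-line reduction: it starts from the same identity (\ref{eqn:MaxIdentity2}), observes that $\rho^{\cE}$ depends only on $\lim_r \frac{1}{r}\log a_r$, and then defers all the algebra to Cohen's proof of his Theorem 3, supplying only a dictionary between notations. You instead solve the variational problem $\sup_{t\in[0,1]}\bigl[\varphi(t)+t\log\mu\bigr]$ in-house via the Legendre structure $\varphi(t)=t\log F(x(t))-\log x(t)$, $\varphi'(t)=\log F(x(t))$ already extracted in the proof of Lemma \ref{lem:RateDerivative}. The pieces check out: the collapse $\varphi(t^{\ast})+t^{\ast}\log\mu=-\log x(t^{\ast})$ is right; solving $F(x)=1/\mu$ does reduce to a linear equation whose solution is $x^{\ast}=2d\mu/\bigl((\mu+d-1)(\mu+1)\bigr)$ (your anticipated factorization of $A^{2}-(d-2)^{2}/(4d^{2})$ is correct); and the interiority worry resolves itself, since $\varphi'(0^{+})=-\tfrac12\log(d-1)$ and $\varphi'(1^{-})=\log F(0)=-\infty$, so $\varphi'(t^{\ast})=-\log\mu$ has a (unique, by strict concavity) solution $t^{\ast}\in(0,1)$ exactly when $\mu>\sqrt{d-1}$, with $x(t^{\ast})\in(0,r)$ automatically. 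What your route buys is a self-contained proof, at the cost of the page of generating-function algebra the paper outsources; the passage from the discrete max to the continuous sup is, as you say, routine once $\gr(\cT)$ is known to exist.

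One caveat, which is a defect of the statement rather than of your argument: your computation yields $\rho^{\cE}=\frac{(\mu+d-1)(\mu+1)}{2d\mu}=\frac12\bigl(1+\frac{\mu^{2}+d-1}{d\mu}\bigr)$ in the supercritical case and $e^{\varphi(0)}=\frac12+\frac{\sqrt{d-1}}{d}$ in the subcritical case, i.e.\ the \emph{lazy-walk} versions of the displayed formulas. The corollary displays Cohen's simple-walk expressions (note $\frac{2\sqrt{d-1}}{d}\neq e^{\varphi(0)}$), even though $\rho^{\cE}$ is defined through the lazy walk; the two answers differ by the affine map $\rho\mapsto\frac12(1+\rho)$. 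So ``exponentiating $\varphi(0)$'' does not literally give the stated constant, and you should say explicitly that you are proving the lazified formula. You also correctly read $e^{\gamma}$ as $\gr(\cT)$; the statement's $\gamma:=\gr(\cT)$ only parses if $\gamma$ is meant to be the logarithmic growth.
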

\begin{proof}
We can read the exponent $\rho^\cE$ from the formula (\ref{eqn:MaxIdentity2}). It will depend only on the asymptotic growth of the sequence $\log a_r$. To get the formula one has now to repeat the computation proving \cite[Theorem 3]{Cohen} where $\gamma_i$'s in Cohen's paper are replaced by our $|S_{\mathcal T}(o,i)|=a_id(d-1)^{i-1}$, $2t$ is our degree $d$, Cohen's $a_{i,n}$ is our $d^n\mathbb P(d(X_n,o)=i)$ and the quantity $\|a\|$ being defined by \cite[(3.3)]{Cohen} is then $d\rho^\cE$. 
\end{proof}

\section{Questions and further directions}\label{sec:questions}

As we stated in the Introduction, Question \ref{que:main} is in fact two separate questions packed together. The first is the same question, but we add the assumption that the tree equals its own spine. 

\begin{que}\label{que:spine}
Let $(T,o)$ be a unimodular random rooted tree with bounded degree, with no leafs and with infinitely many ends a.s. Is it true that the growth of $(T,o)$ exists?
\end{que}

The second asks whether passing to the spine can change having a growth or change the value of the growth. Currently we only prove this assuming the lower bound on the upper growth. 

\begin{que}
Let $(T,o)$ be a unimodular random rooted tree with bounded degree and with infinitely many ends a.s. Assume that the growth of the spine of $(T,o)$ exists. Does the growth of $(T,o)$ exist and is it equal to the growth of the spine? 
\end{que}

In other terms, if we take such a unimodular random rooted tree without leaves, can we decorate it in a umimodular way with finite trees hanging down of finite expected size, that changes either having growth, or the value of the growth?

These two questions, if answered right, will formally prove Question \ref{que:main} and we feel that they are of fundamentally different nature so it makes sense to separate then. 

It is natural to ask whether in our main theorem on the existence of growth, one can drop the assumption that $(T,o)$ is a tree. In particular, is there a similar result for arbitrary unimodular random rooted graphs? We expect that the answer is negative but were unable to produce counterexamples. 

\bibliographystyle{abbrv}
\bibliography{co-spectral}

\end{document}